\numberwithin{equation}{section}
\newtheorem{thm}{Theorem}[section]
\newtheorem{prop}[thm]{Proposition}
\newtheorem{lem}[thm]{Lemma}
\newtheorem{cor}[thm]{Corollary}
\newtheorem*{iproblem*}{Problem}
\theoremstyle{definition}
\newtheorem{defi}[thm]{Definition}
\newtheorem{rem}[thm]{Remark}
\newtheorem{exam}[thm]{Example}
\newcommand{\ZZ}{\mathbf{Z}}
\newcommand{\RR}{\mathbf{R}}
\newcommand{\PP}{\mathbf{P}}
\newcommand{\SL}{\mathbf{SL}}
\newcommand{\SSS}{\mathbf{S}}
\newcommand{\Aut}{{\rm Aut}}
\newcommand{\norma}{{\rm N}}
\newcommand{\se}{\subseteq}
\newcommand{\inv}{^{-1}}
\newcommand{\lra}{\longrightarrow}
\newcommand{\wt}{\widetilde}
\newcommand{\wh}{\widehat}
\newcommand{\ol}{\overline}
\newcommand{\bnda}{\Delta}
\newcommand{\bnd}{\partial}
\newcommand{\ext}{\mathrm{Ext}}
\newcommand{\prob}{\mathrm{P}}
\newcommand{\ru}{\mathrm{C}^\mathrm{b}_\mathrm{ru}}
\newcommand{\cont}{\mathrm{C}}
\newcommand{\cbu}{\mathrm{C}^\mathrm{b}_\mathrm{u}}
\newcommand{\cb}{\mathrm{C}^\mathrm{b}}
\def\hyph{-\penalty0\hskip0pt\relax}
\title[Furstenberg boundaries for pairs of groups]{Furstenberg boundaries for pairs of groups}
\date{February 2019}
\author[N. Monod]{Nicolas Monod}
\address{EPFL, Switzerland}
\email{nicolas.monod@epfl.ch}
\begin{document}
\begin{abstract}
Furstenberg has associated to every topological group $G$ a universal boundary $\bnd(G)$. If we consider in addition a subgroup $H<G$, the relative notion of $(G,H)$\hyph{}boundaries admits again a maximal object $\bnd(G,H)$. In the case of discrete groups, an equivalent notion was introduced by Bearden--Kalantar~\cite{Bearden-Kalantar_arx} as a very special instance of their constructions. However, the analogous universality does not always hold, even for discrete groups. On the other hand, it does hold in the affine reformulation in terms of convex compact sets, which admits a universal simplex $\bnda(G,H)$, namely the simplex of measures on $\bnd(G,H)$. We determine the boundary $\bnd(G,H)$ in a number of cases, highlighting properties that might appear unexpected.
\end{abstract}
\maketitle

\thispagestyle{empty}
\setcounter{tocdepth}{1}
\tableofcontents
\newpage

\section{Introduction}

A compact topological space on which a group $G$ acts by homeomorphisms is called a \textbf{$G$-flow}. When $G$ is a topological group, the action is assumed to be jointly continuous and $G$ Hausdorff.

Furstenberg~\cite{Furstenberg63,Furstenberg73_bnd} discovered the particular importance of the case where the action is \emph{minimal} and \emph{strongly proximal}; the flow is then called a \textbf{$G$\hyph{}boundary}. He showed for instance that each group $G$ admits a \emph{universal} boundary, now called the \textbf{Furstenberg boundary $\bnd(G)$ of $G$}.

Although $\bnd(G)$ is often a huge non-metrisable space, Furstenberg showed that for semi-simple Lie groups it reduces to a homogeneous space $\bnd(G) = G/P$, where $P$ is a minimal parabolic subgroup.

\medskip
It turns out, following Furstenberg and Glasner~\cite{Glasner_LNM}, that the notion of boundary is even more natural and transparent if we recast the whole discussion in the setting of \emph{convex compact spaces}:

An \textbf{affine $G$-flow} refers to a compact convex set $K$ endowed with a $G$-action preserving both the topology and the affine structure of $K$. Here $K$ is understood to lie in an arbitrary locally convex (Hausdorff) topological vector space over the reals. A \textbf{$G$\hyph{}morphism} is a $G$-equivariant continuous affine map.

Any $G$-flow $X$ gives an affine $G$-flow $\prob(X)$, the space of probability measures on $X$. There are of course many other affine $G$-flows. Now $X$ is a $G$\hyph{}boundary if and only if $\prob(X)$ satisfies just one single minimality condition: namely that it is \textbf{$G$\hyph{}irreducible}. This means by definition that it does not contain a proper affine subflow. The simplex of probability measures on $\bnd(G)$, which we denote by $\bnda(G)$, is universal in that setting.

\bigskip
This article considers the more general \emph{relative case}, where we are given a topological group $G$ together with a subgroup $H<G$. We shall see that there exist again canonical relative objects $\bnd(G,H)$ and $\bnda(G,H)$. However, there are interesting complications; notably, the topological flow $\bnd(G,H)$ behaves less well than its affine counterpart $\bnda(G,H)$. We therefore start off in the affine setting.

\begin{defi}
An \textbf{affine $(G,H)$-flow} is an affine $G$-flow with an $H$-fixed point. It is called \textbf{$(G,H)$\hyph{}irreducible} if it does not contain any smaller affine $(G,H)$-flow.
\end{defi}

The classical case corresponds to the trivial subgroup $H=1$.


\begin{prop}\label{prop:univ}
There exists a $(G,H)$\hyph{}irreducible affine flow that is \textbf{universal} in the sense that it admits a $G$\hyph{}morphism onto every $(G,H)$\hyph{}irreducible affine flow. Moreover, this universal flow is unique up to unique $G$\hyph{}morphisms.
\end{prop}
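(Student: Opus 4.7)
The plan is to adapt the standard Furstenberg--Glasner construction of the universal irreducible affine flow (the case $H=1$) to the pointed setting via a Zorn argument, and then to prove uniqueness by a Markov--Kakutani argument exploiting the $H$-fixed-point set. For existence, fix representatives $K_\alpha$ of the isomorphism classes of $(G,H)$-irreducible affine flows---a set, not a proper class, by the usual cardinality bound for compact convex $G$-flows generated by a single orbit---and in each $K_\alpha$ pick an $H$-fixed point $p_\alpha$. The product $K_0 = \prod_\alpha K_\alpha$ is an affine $G$-flow containing the $H$-fixed point $p = (p_\alpha)$, hence an affine $(G,H)$-flow. Consider the poset of closed convex $G$-invariant subsets of $K_0$ containing at least one $H$-fixed point, ordered by reverse inclusion. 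Chains have lower bounds: the sets $L^H$, for $L$ running through a chain, form a decreasing family of nonempty compact subsets of $K_0$ with the finite intersection property, so their intersection is nonempty and produces an $H$-fixed point inside $\bigcap L$. Zorn yields a minimal element $K$, which is $(G,H)$-irreducible by construction: any proper closed convex $G$-invariant subflow would have no $H$-fixed point by minimality. For universality, given any $(G,H)$-irreducible $L$, identify $L$ with some $K_\beta$ and restrict the projection $\pi_\beta : K_0 \to K_\beta$ to $K$; since $\pi_\beta$ is $H$-equivariant, the image is a closed convex $G$-invariant subset of $L$ containing an $H$-fixed point, and irreducibility of $L$ forces this image to be all of $L$.

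Uniqueness reduces to a single lemma: for such a universal $K$, the only $G$-endomorphism is the identity. A $G$-endomorphism $\chi$ is in particular $H$-equivariant, so it carries $K^H$ into itself. The set $K^H$ is a nonempty (by definition of $(G,H)$-flow) compact convex subset of $K$, and $\chi|_{K^H}$ is affine and continuous; by Markov--Kakutani applied to the single map, $\chi$ fixes some $p \in K^H$. The equalizer $F = \{x \in K : \chi(x) = x\}$ is then a closed convex $G$-invariant subset of $K$ containing the $H$-fixed point $p$, and $(G,H)$-irreducibility forces $F = K$, i.e.\ $\chi = \Id_K$. Given two universal $(G,H)$-irreducibles $K$ and $K'$, universality supplies $G$-morphisms $\phi : K \to K'$ and $\psi : K' \to K$; the compositions $\psi\phi$ and $\phi\psi$ are $G$-endomorphisms of universal irreducibles and hence are both identities, so $\phi$ and $\psi$ are mutually inverse $G$-isomorphisms. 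The same lemma gives uniqueness of this isomorphism: two $G$-morphisms $K \to K'$ would differ by a $G$-endomorphism of $K$, forced to be the identity.

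The main obstacle is the uniqueness half, specifically finding the right compact convex invariant on which to run Markov--Kakutani; once one observes that $K^H$ is preserved by any $G$-endomorphism, the irreducibility hypothesis closes the argument. Existence is routine Zorn and product machinery, with the only genuine technical point being the cardinality bound that makes the class of $(G,H)$-irreducible flows an honest set.
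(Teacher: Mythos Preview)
Your argument is correct. The existence half is essentially identical to the paper's: both take a set of representatives of $(G,H)$-irreducible flows, form the product, extract a $(G,H)$-irreducible subflow by Zorn, and use the coordinate projections.

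Where you diverge is in the proof that a $(G,H)$-irreducible affine flow has no non-trivial $G$-endomorphism. The paper proves this via a Krein--Milman coincidence lemma: given two $G$-morphisms $f,f'\colon K\to L$ with $L$ irreducible, it shows that the image of the coincidence set $\{x:f(x)=f'(x)\}$ under $f$ is all of $L$, by pushing an $H$-fixed midpoint $(f(x)+f'(x))/2$ to an arbitrary extremal point of $L$ and using extremality to force $f(y)=f'(y)$ at the limit. Specialising to $K=L$ and $f=\Id$ yields the endomorphism statement. Your route is more direct and avoids extremal points altogether: since a $G$-endomorphism $\chi$ preserves $K^H$, Markov--Kakutani (for a single affine self-map of a nonempty compact convex set) produces an $H$-fixed point in the equaliser $\{x:\chi(x)=x\}$, and irreducibility forces the equaliser to be all of $K$. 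This is shorter and uses only a fixed-point theorem rather than the structure of extremal points; the paper's coincidence lemma is nominally more general (it applies to morphisms $K\to L$ with $K\neq L$), but only the endomorphism case is actually used.

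One phrasing quibble: saying two $G$-morphisms $K\to K'$ ``differ by a $G$-endomorphism'' is not quite right, since there is no subtraction of affine maps landing in $K'$. What you mean, and what works, is that once $\phi$ and $\psi$ are established as mutually inverse, any other $\phi'\colon K\to K'$ satisfies $\psi\phi'=\Id_K$ by your lemma applied to $K$, whence $\phi'=\phi\psi\phi'=\phi$.
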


\begin{defi}
We denote the universal $(G,H)$\hyph{}irreducible affine flow by $\bnda(G, H)$.
\end{defi}

At this point we have a universal \emph{convex} compact object, but we lost sight of the initial discussion of $G$-flows: actions on arbitrary compact spaces. Not to worry: $\bnda(G, H)$ is in fact the simplex of probability measures $\prob(\bnd(G,H))$ of a flow $\bnd(G,H)$\,!

It is indeed well understood (since Bauer~\cite[Satz~13]{Bauer61}) when a convex compact space $K$ is of the form $\prob(X)$. This happens exactly when the set of extremal points $\ext(K)$ is closed and when moreover every point of $K$ is a \emph{unique} Choquet integral on $\ext(K)$. This realises $K$ as $\prob(\ext(K))$ and one calls $K$ a \textbf{Bauer simplex}~\cite[II.4.1]{Alfsen_book}.

\begin{thm}\label{thm:Bauer}
$\bnda(G, H)$ is a Bauer simplex.
\end{thm}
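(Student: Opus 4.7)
My plan is to realize $K := \bnda(G, H)$ as $\prob(X)$ for a compact $G$-space $X \subseteq K$; Bauer's theorem~\cite[II.4.1]{Alfsen_book} will then immediately identify $K$ as a Bauer simplex with $\ext K = X$ (which will serve as the definition of $\bnd(G,H)$).

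I first construct a $G$-equivariant continuous affine section of the barycenter map. Fix any $H$-fixed point $x_0 \in K$, so that $\delta_{x_0}$ is $H$-fixed in the affine $G$-flow $\prob(K)$. A Zorn argument on closed convex $G$-invariant subsets of $\prob(K)$ that contain an $H$-fixed point (lower bounds on chains being secured by weak-$*$ compactness applied to the chain of nonempty compact sets of $H$-fixed points) produces a $(G,H)$-irreducible subflow $L \subseteq \prob(K)$. By Proposition~\ref{prop:univ} there exists a unique $G$-morphism $\phi \colon K \to L$, and the composition $\bary \circ \phi \colon K \to K$ is a $G$-endomorphism of the universal flow, so it equals $\Id_K$ by the uniqueness clause of Proposition~\ref{prop:univ}. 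Hence $\phi$ is injective; the image $\phi(K) \subseteq L$ is a closed convex $G$-invariant subset containing the $H$-fixed point $\phi(x_0)$, so $(G,H)$-irreducibility of $L$ forces $\phi(K) = L$. Thus $\phi$ and $\bary|_L$ are mutually inverse affine $G$-homeomorphisms between $K$ and $L$.

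The heart of the proof is then the claim that every extreme point of $L$, viewed as a subset of $\prob(K)$, is a Dirac measure. Granting this, set $X := \{x \in K : \delta_x \in L\}$, which is closed (as the preimage of the closed set $L$ under the continuous Dirac embedding $K \to \prob(K)$) and $G$-invariant. The Krein--Milman theorem then yields $L = \overline{\mathrm{conv}}\{\delta_x : x \in X\} = \prob(X)$ inside $\prob(K)$, and transporting through $\phi$ gives the affine $G$-homeomorphism $K \cong \prob(X)$, completing the proof.

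The main obstacle is the Dirac-extremality claim: \emph{a priori}, $L$ need not be a face of $\prob(K)$, so a non-Dirac $\mu \in \ext L$ could admit a decomposition $\mu = t\nu_1 + (1-t)\nu_2$ with $\nu_i \in \prob(K)\setminus L$ and $\nu_1 \neq \nu_2$. Since the bijection $\bary|_L$ carries $\ext L$ into $\ext K$, one would necessarily have $\bary(\nu_1) = \bary(\nu_2) = \bary(\mu)$. I expect the contradiction to arise by iterating the section: the pushforward $\phi_*\phi(x) \in \prob(\prob(K))$ composed with the outer barycenter yields another continuous affine $G$-equivariant self-map of $K$, whose rigidity under the uniqueness clause of Proposition~\ref{prop:univ} should force the $\nu_i$ back into $L$, contradicting the non-triviality of the decomposition. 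Alternatively, one might extract a proper $(G,H)$-irreducible subflow of $L$ from the decomposition data, again contradicting the minimality from Zorn.
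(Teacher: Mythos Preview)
Your setup coincides with the paper's: you take a $(G,H)$-irreducible $L\subseteq\prob(K)$ and observe that $\bary|_L\colon L\to K$ is an affine $G$-isomorphism with inverse $\phi$. The gap is in the ``Dirac-extremality claim'', where you overlook the one-line finish and instead propose two vague work-arounds (iterating $\phi_*$, or extracting a proper subflow) that you yourself do not carry out.

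You already note that the isomorphism $\bary|_L$ sends $\ext L$ into $\ext K$. So for $\mu\in\ext L$ you have $\bary(\mu)=x\in\ext K$. The missing observation is the elementary fact that \emph{the only probability measure on a convex compact set $K$ whose barycentre is an extreme point $x$ is $\delta_x$} (Phelps~\cite[1.4]{Phelps_LNM}); hence $\mu=\delta_x$ and your claim is proved. Your decomposition $\mu=t\nu_1+(1-t)\nu_2$ with $\bary(\nu_i)=x$ is already impossible for this reason: each $\nu_i$ would also represent $x$, forcing $\nu_1=\nu_2=\delta_x=\mu$. This is exactly how the paper proceeds: it runs the same fact in the other direction, starting from $x\in\ext K$ and concluding $\delta_x\in L$ because $\bary^{-1}(x)\cap\prob(K)=\{\delta_x\}$ while $\bary|_L$ is onto; then $L\supseteq\prob\big(\ol{\ext K}\big)$, so by irreducibility $L=\prob\big(\ol{\ext K}\big)$, and the isomorphism $\bary|_L$ identifies this with $K$, which is Bauer's criterion. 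Once you invoke the representing-measure fact, your argument and the paper's are the same.
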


\begin{defi}
The \textbf{Furstenberg boundary $\bnd(G,H)$} of the pair $(G,H)$ is the set of extremal points $\bnd(G,H) = \ext\left(\bnda(G,H)\right)$.
\end{defi}

In other words, Theorem~\ref{thm:Bauer} states that there is a canonical $G$-flow, the Furstenberg boundary $\bnd(G,H)$, such that
\begin{equation*}
\prob\left(\bnd(G,H)\right) = \bnda(G, H).
\end{equation*}
Since we return to topological $G$-flows, we should define general $(G,H)$\hyph{}boundaries:

\begin{defi}
A $G$-flow $X$ is a \textbf{$(G,H)$\hyph{}boundary} if $\prob(X)$ is $(G,H)$\hyph{}irreducible.
\end{defi}

In fact this definition is equivalent to a characterisation given, in the case of discrete groups, by Bearden--Kalantar~\cite{Bearden-Kalantar_arx} in the context of their much more general non-commutative notion of Furstenberg--Hamana boundaries for unitary representations. To make this explicit, recall first that a probability measure $\mu$ on a $G$-flow $X$ is said to be \textbf{$G$-contracted} to a point $x$ if the Dirac mass $\delta_x$ belongs to the orbit closure $\ol{G \mu}$. Using the Krein--Milman theorem, one shows:

\begin{prop}\label{prop:traduc}
Let $X$ be a $G$-flow. Then $X$ is a $(G,H)$\hyph{}boundary if and only if $H$ fixes a measure in $\prob(X)$ and every such fixed measure is $G$-contracted to every point of $X$.
\end{prop}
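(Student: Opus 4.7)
The plan is to unwind the definitions so that the two sides of the equivalence become statements about the same closed $G$\hyph{}invariant convex subsets of $\prob(X)$, and then to pivot between them using Krein--Milman together with its partial converse due to Milman. Since $\prob(X)$ is an affine $G$\hyph{}flow and its extremal points are exactly the Dirac masses $\delta_x$ for $x \in X$, both directions will reduce to elementary manipulations with closed convex hulls of orbits.

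\medskip

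In more detail: by definition $\prob(X)$ is $(G,H)$\hyph{}irreducible if and only if (i) it contains an $H$\hyph{}fixed point and (ii) every closed $G$\hyph{}invariant convex subset $K \subseteq \prob(X)$ containing an $H$\hyph{}fixed point equals $\prob(X)$. Condition (i) matches the first half of the hypothesis verbatim, so the content of the proposition is the equivalence of (ii) with the contraction property.

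\medskip

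For the forward implication, assume $\prob(X)$ is $(G,H)$\hyph{}irreducible and let $\mu$ be any $H$\hyph{}fixed measure. The orbit closure $\ol{G\mu}$ is compact, and $K := \ol{\mathrm{co}}\bigl(\ol{G\mu}\bigr)$ is a closed $G$\hyph{}invariant convex subset of $\prob(X)$ containing the $H$\hyph{}fixed point $\mu$; condition (ii) forces $K = \prob(X)$. Milman's partial converse to Krein--Milman then gives
\begin{equation*}
\ext\bigl(\prob(X)\bigr) \;\subseteq\; \ol{G\mu},
\end{equation*}
and since the extremal points of $\prob(X)$ are precisely the Dirac masses, we conclude that $\delta_x \in \ol{G\mu}$ for every $x \in X$, i.e.\ $\mu$ is $G$\hyph{}contracted to every point of $X$.

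\medskip

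For the backward implication, suppose every $H$\hyph{}fixed measure is $G$\hyph{}contracted to every point. Let $K \subseteq \prob(X)$ be a closed $G$\hyph{}invariant convex set containing some $H$\hyph{}fixed $\mu$. By hypothesis $\delta_x \in \ol{G\mu} \subseteq K$ for every $x \in X$, so $K$ contains $\ext\bigl(\prob(X)\bigr)$, and Krein--Milman yields $K = \prob(X)$. This establishes (ii) and hence $(G,H)$\hyph{}irreducibility. The only non-trivial input is the pair Krein--Milman/Milman; there is no real obstacle since compactness of $\ol{G\mu}$ inside the compact set $\prob(X)$ is immediate, and the identification of $\ext(\prob(X))$ with the Dirac masses is standard.
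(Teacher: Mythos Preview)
Your proof is correct and follows precisely the approach the paper has in mind: the paper records Proposition~\ref{prop:traduc} as a consequence of Krein--Milman and later proves the general version for an arbitrary affine $(G,H)$-flow $K$ in Lemma~\ref{lem:KM}, with the same two-step argument (closed convex hull of the orbit plus Milman's partial converse in one direction, Krein--Milman in the other). Your argument is exactly that lemma specialised to $K=\prob(X)$, together with the standard identification $\ext(\prob(X))=\{\delta_x : x\in X\}$.
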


This property coincides with the characterisation from~\cite{Bearden-Kalantar_arx}.

\medskip

Of course, we have not defined anything new when $H\lhd G$ is a normal subgroup: $(G,H)$-flows, boundaries and universal objects reduce to the classical objects for the quotient group $G/H$. However, general pairs $H<G$ exhibit completely different behaviours, as will become clear with a few examples which should serve as a warning, or better as an advertisement, for the new phenomena.

For instance, although the Furstenberg boundary $\bnd(G,H)$ is canonical, unique up to unique identification, and in a sense the \emph{maximal} $(G,H)$\hyph{}boundary, it is however not \emph{universal} in the strong sense of Proposition~\ref{prop:univ}.

\begin{prop}\label{prop:not:univ}
There is not always a $G$-map from $\bnd(G,H)$ to every $(G,H)$\hyph{}boundary, even for discrete groups.
\end{prop}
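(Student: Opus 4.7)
The idea is to use universality of $\bnda(G,H)$ to reduce the existence of a topological $G$\hyph{}map $f\colon \bnd(G,H)\to X$ to whether the canonical affine morphism preserves extremals, and then to exhibit a discrete pair $(G,H)$ and a $(G,H)$\hyph{}boundary $X$ where this preservation fails.

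For the reduction, fix a $(G,H)$\hyph{}boundary $X$ and let $\Phi\colon\bnda(G,H)\to\prob(X)$ be the unique affine $G$\hyph{}morphism supplied by Proposition~\ref{prop:univ}. Theorem~\ref{thm:Bauer} identifies $\bnda(G,H)=\prob(\bnd(G,H))$, and the Dirac embedding gives a canonical homeomorphism $X\cong\ext\prob(X)$. Any continuous $G$\hyph{}map $f\colon\bnd(G,H)\to X$ induces the affine $G$\hyph{}morphism $\prob(f)$, which by the uniqueness clause in Proposition~\ref{prop:univ} coincides with $\Phi$; conversely, if $\Phi$ maps every Dirac mass $\delta_\xi$ with $\xi\in\bnd(G,H)$ into $\ext\prob(X)$, then $\xi\mapsto\Phi(\delta_\xi)$ is a continuous $G$\hyph{}equivariant map $\bnd(G,H)\to X$. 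Thus the existence of $f$ is equivalent to $\Phi$ preserving Dirac masses.

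To exhibit failure I aim for a discrete pair $(G,H)$ such that (i)~$\bnd(G,H)$ contains an $H$\hyph{}fixed point $\xi_0$, and (ii)~some $(G,H)$\hyph{}boundary $X$ satisfies $X^H=\emptyset$. Then $\Phi(\delta_{\xi_0})\in\prob(X)$ is an $H$\hyph{}invariant measure whose non-empty support is $H$\hyph{}invariant and closed; since $X^H=\emptyset$, this support must contain at least two points, so $\Phi(\delta_{\xi_0})$ is non-Dirac and no $G$\hyph{}map $f$ can exist. For~(ii), a convenient source is a genuine $G$\hyph{}boundary $X$ on which an amenable $H$ acts without fixed points: amenability yields an $H$\hyph{}fixed $\mu\in\prob(X)$, while $G$\hyph{}irreducibility of $\prob(X)$ together with Milman's converse forces $\ol{G\mu}\supseteq\ext\prob(X)$, so Proposition~\ref{prop:traduc} applies and $X$ is a $(G,H)$\hyph{}boundary.

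The principal obstacle is securing~(i): the $H$\hyph{}fixed point in $\bnda(G,H)$ guaranteed by the definition of a $(G,H)$\hyph{}flow is a priori only an $H$\hyph{}invariant measure on $\bnd(G,H)$, and its support need not contain a point fixed by $H$. Overcoming this requires explicit control on the extremal structure of $\bnda(G,H)$, which I would draw from the concrete descriptions of $\bnd(G,H)$ developed later in the paper—e.g.\ realisations as homogeneous $G$\hyph{}quotients or as inverse limits of such—in order to certify a genuine $H$\hyph{}fixed extremal $\xi_0$ and thereby complete the counterexample.
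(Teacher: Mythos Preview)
Your general strategy---find a pair $(G,H)$ such that (i) $\bnd(G,H)$ has an $H$\hyph{}fixed point and (ii) some $(G,H)$\hyph{}boundary $X$ has $X^H=\varnothing$---is exactly the paper's approach. The reduction via the unique affine morphism $\Phi$ is correct but not really needed: once (i) and (ii) hold, any continuous $G$\hyph{}map $\bnd(G,H)\to X$ would send an $H$\hyph{}fixed point to an $H$\hyph{}fixed point, which is impossible.

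The genuine gap is in your concrete proposal for~(ii). You suggest taking $H$ \emph{amenable} and $X$ a genuine $G$\hyph{}boundary with $X^H=\varnothing$. But for discrete groups, amenability of $H$ gives $\bnd(G,H)\cong\bnd(G)$ (Proposition~\ref{prop:relamen}), and the classical Furstenberg boundary $\bnd(G)$ \emph{does} map onto every $G$\hyph{}boundary $X$. So no counterexample can arise this way. Said differently, with amenable $H$ conditions~(i) and~(ii) are mutually exclusive: if $\bnd(G)$ had an $H$\hyph{}fixed point $\xi_0$, its image under the canonical map $\bnd(G)\to X$ would be an $H$\hyph{}fixed point of $X$.

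The paper instead takes $H$ \emph{non-amenable}: $G=F_4=F_2*F_2$ with $H$ the first free factor. Condition~(i) is then obtained from Theorem~\ref{thm:malnormal} (via Corollary~\ref{cor:free}): since $H$ is malnormal and non-amenable, $\bnd(G,H)\cong\beta(G/H)$, and the trivial coset is an $H$\hyph{}fixed point. For~(ii), $X$ is a circle on which $H$ acts by a non-trivial rotation (so $X^H=\varnothing$) while the second $F_2$ acts minimally and strongly proximally via $\SL_2(\ZZ)$ on $\PP^1$; the $H$\hyph{}fixed round measure is then $G$\hyph{}contracted to every point, making $X$ a $(G,H)$\hyph{}boundary. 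Note that $X$ is \emph{not} a $G$\hyph{}boundary in the classical sense (the rotation action of $H$ is not proximal), which is precisely what allows (i) and (ii) to coexist.
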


We can illustrate this very concretely on an example.

\begin{exam}\label{exam:F4}
Let $G=F_4$ be a free group on four generators and let $H<G$ be the first free factor $H=F_2$ in a splitting $G=F_2 * F_2$. Let $X$ be a topological circle. We endow $X$ with a $G$-action by specifying the following actions for each of the two copies of $F_2$.

For $H$, we choose two arbitrary rotations of the circle, at least one of which is non-trivial. Thus $H$ acts via a non-trivial abelian quotient.

For the second copy of $F_2$, we identify $X$ with the projective line and map $F_2$ to $\SL_2(\ZZ)$ by sending its generators to $\left(\begin{smallmatrix} 1&1\\ 0&1 \end{smallmatrix}\right)$ and $\left(\begin{smallmatrix} 1&0\\ 1&1 \end{smallmatrix}\right)$. This yields a projective action on $X$.

We claim that $X$ is a $(G,H)$\hyph{}boundary. Indeed, $H$ fixes the round measure on the circle. On the other hand, the second $F_2$ acts minimally and strongly proximally and hence \emph{any} measure can be contracted to any point. The claim follows.

\itshape
However, there is no $G$-map from $\bnd(G,H)$ to $X$. The key point is that $\bnd(G,H)$ will be shown to contain an $H$-fixed point. But, by construction, $X$ does not.\upshape
\end{exam}

\begin{rem}\label{rem:image:contains}
What remains true in general is that for any $(G,H)$\hyph{}boundary $X$ there is a $G$-map $\bnd(G,H) \to \prob(X)$ whose image \emph{contains} $X$, see Proposition~\ref{prop:image:contains}.
\end{rem}

A major advantage of considering non-discrete groups is that Furstenberg boundaries are completely understood for all connected Lie groups, where they are always homogeneous spaces~\cite[IV.3.3]{Glasner_LNM}. This again gets more complicated for the relative theory:

\begin{exam}\label{exam:affine}
Let $G= \RR^2 \rtimes \SL_2(\RR)$ be the special affine group of $\RR^2$ and let $H=\SL_2(\RR)$. Consider the visual compactification $D=\RR^2 \sqcup \SSS^1$ of $\RR^2$ obtained by gluing the circle of (oriented) directions; thus $D$ is a topological disc. We view $D$ as a $G$-flow where the action on the open disc is the affine action on $\RR^2$, while the action on $\SSS^1$ is induced by the linear representation on the space of directions via the quotient map to $H$.
\end{exam}

This example illustrates several interesting points, proved Section~\ref{sec:LC} and particularly Theorem~\ref{thm:af}:

\medskip
\begin{enumerate}[(1)]
\item $D$ is a $(G,H)$\hyph{}boundary.\label{af:pt:bnd}
\item $D$ is not a minimal $G$-flow. It will follow that the Furstenberg boundary $\bnd(G,H)$ is not minimal either --- much less homogeneous.\label{af:pt:no-min}
\item The Furstenberg boundary $\bnd(G)$ is realised by the natural $G$-action on the projective line $\PP^1$. Therefore, there is no $G$-map from $\bnd(G)$ to $D$.\\
We shall deduce that there is no $G$-map from $\bnd(G)$ to $\bnd(G,H)$.\label{af:pt:no-map}
\item $D$ is not maximal; in fact, the Furstenberg boundary $\bnd(G,H)$ consists of the open disc together with a non-metrisable compact space on which $\RR^2$ acts trivially, glued above $\SSS^1$.\label{af:pt:no-uni}
\end{enumerate}

\medskip
There are however also cases where the Furstenberg boundary of pairs behaves very simply and is indeed a quotient of the Furstenberg boundary of the ambient group:

\begin{exam}\label{exam:Gras}
Let $0<p<n$, let $G=\SL_n(\RR)$ and let $H<G$ be the block-wise upper-triangular subgroup with diagonal blocks of size $p$ and $n-p$. Then
\begin{equation*}
  \bnd(G,H) \cong G/H \cong \mathrm{Gr}_p(\RR^n),
\end{equation*}
the Grassmannian of $p$-spaces in $\RR^n$.
\end{exam}

This is a particular case of a general phenomenon for co\hyph{}compact subgroups $H<G$ in any topological group $G$. We shall prove that in this case $\bnd(G,H)$ is a homogeneous space $G/\wh{H}$ for a \emph{hull} $\wh{H}$ canonically attached to $H<G$ up to conjugation. In Example~\ref{exam:Gras}, we have $\wh{H}=H$, which holds more generally for parabolic subgroups:

\begin{thm}\label{thm:parabolic}
Let $G$ be a connected semi-simple Lie group with finite center and let $H$ be any parabolic subgroup.

Then $\bnd(G,H) \cong G/H$.

The corresponding statement holds for semi-simple algebraic groups over local fields.
\end{thm}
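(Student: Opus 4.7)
The plan is to deduce the theorem from the general co-compact result announced just above the statement: for $H<G$ co-compact one has $\bnd(G,H)\cong G/\wh H$ for a canonical hull $\wh H \supseteq H$. Since any parabolic subgroup is co-compact in a semi-simple Lie group (or algebraic group over a local field), it will suffice to show that $\wh H = H$.

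First I would verify that $G/H$ is itself a $(G,H)$\hyph{}boundary. Up to conjugation, $H$ contains a minimal parabolic $P$, so the natural surjection $G/P\to G/H$ is a continuous $G$\hyph{}equivariant quotient of $\bnd(G)$; hence $G/H$ is a $G$\hyph{}boundary, meaning $\prob(G/H)$ is $G$\hyph{}irreducible. The coset $eH$ is $H$-fixed, so $\delta_{eH}$ is an $H$-fixed point of $\prob(G/H)$, and any affine $(G,H)$\hyph{}subflow is in particular a non-empty $G$\hyph{}invariant closed convex subset, therefore all of $\prob(G/H)$. Thus $\prob(G/H)$ is $(G,H)$\hyph{}irreducible and $G/H$ is a $(G,H)$\hyph{}boundary.

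Next, by Remark~\ref{rem:image:contains} there is a $G$-map $\pi\colon\bnd(G,H)\to\prob(G/H)$ whose image contains all Dirac masses $\delta_{gH}$. Let $y\in\bnd(G,H)\cong G/\wh H$ be the class of $\wh H$; it is fixed by $\wh H\supseteq H$, so $\pi(y)$ is an $H$-fixed probability measure on $G/H$. Granting the key lemma that $\delta_{eH}$ is the only such measure, I conclude $\pi(y)=\delta_{eH}$ and therefore $\wh H=\Stab_G(y)\subseteq\Stab_G(\delta_{eH})=H$. Together with $\wh H\supseteq H$ this gives $\wh H = H$, as required.

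The main obstacle is the key lemma on uniqueness of the $H$-fixed measure. I would establish it via the Bruhat decomposition: the $H$\hyph{}orbits on $G/H$ are parametrised by the finite double-coset space $W_H\backslash W/W_H$, and $\{eH\}$ is the unique closed orbit; every other orbit is $H$\hyph{}equivariantly isomorphic to $H/(H\cap wHw^{-1})$ for some $w\notin W_H$, where $H\cap wHw^{-1}$ is a proper subgroup whose Lie algebra misses a non-trivial part of the unipotent radical of $H$. Consequently $H\cap wHw^{-1}$ has infinite covolume in $H$, the orbit carries no finite $H$\hyph{}invariant measure, and any $H$\hyph{}fixed probability measure on $G/H$ must be supported on $\{eH\}$. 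The algebraic case over a local field follows by the same argument, with the algebraic Bruhat decomposition replacing the real one.
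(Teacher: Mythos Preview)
Your argument is correct and follows the paper's architecture up to the final step: reduce to $\bnd(G,H)=G/\wh H$ via the co-compact result (Theorem~\ref{thm:coc:hull}), exhibit $G/H$ as a $(G,H)$-boundary by noting it is already a $G$-boundary (a quotient of $G/P=\bnd(G)$), and produce a $G$-map $\pi\colon G/\wh H\to\prob(G/H)$ via Proposition~\ref{prop:image:contains}. Where you diverge is in concluding $\wh H=H$. You invoke the uniqueness of the $H$-fixed probability on $G/H$ and argue it through the Bruhat stratification; this is correct but brings in structure theory specific to semi-simple groups. The paper instead observes that the image of $\pi$ is a single $G$-orbit (because $G/\wh H$ is homogeneous) which, by Proposition~\ref{prop:image:contains}, contains the $G$-orbit of Dirac masses and hence \emph{equals} it; so $\pi$ already lands in $G/H$, and the resulting $G$-map $G/\wh H\to G/H$ forces $\wh H\subseteq H$. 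This bypasses your key lemma entirely and is what allows the paper to state the more general Theorem~\ref{thm:parabolic:abstract}, valid for any closed $H\supseteq\wh P$ in any topological group with a co-compact relatively amenable subgroup --- no Bruhat decomposition required. Your route, by contrast, makes explicit which measure-theoretic feature of parabolics is at work and delivers $\wh H\subseteq H$ without passing through a conjugate of $H$.
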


We already pointed out that the relative theory reduces to the classical one when $H<G$ is a normal subgroup. This invites the question: what happens at the opposite extreme, when $H$ is \textbf{malnormal} in $G$? That is, when $g H g\inv$ intersects $H$ trivially for all $g\notin H$.

\begin{thm}\label{thm:malnormal}
Let $H$ be a malnormal subgroup of a discrete group $G$. Suppose $H$ non-amenable.

Then $\bnd(G,H) \cong \beta(G/H)$, the Stone--\v{C}ech compactification of the $G$-set $G/H$.
\end{thm}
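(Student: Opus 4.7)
The plan is to identify $\prob(\beta(G/H))$ with the universal $(G,H)$-irreducible affine flow $\bnda(G,H)$ and then pass to extreme points. Two things must be verified: that $\beta(G/H)$ is a $(G,H)$\hyph{}boundary (so $\prob(\beta(G/H))$ is $(G,H)$\hyph{}irreducible), and that $\prob(\beta(G/H))$ admits a $G$\hyph{}morphism onto every $(G,H)$\hyph{}irreducible affine flow. Uniqueness from Proposition~\ref{prop:univ} then closes the argument.

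For the first point I would use the characterisation in Proposition~\ref{prop:traduc}. The coset $eH$ is manifestly $H$\hyph{}fixed, so $\delta_{eH}$ is an $H$\hyph{}invariant measure. The crux is to show it is the \emph{only} one. Set $Y:=G/H\setminus\{eH\}$; malnormality forces the $H$\hyph{}stabiliser of $gH$ for $g\notin H$ to equal $H\cap gHg\inv=\{1\}$, so the $H$\hyph{}action on $Y$ is free. Because $\{eH\}$ is clopen in $\beta(G/H)$ and $\beta(G/H)=\beta(Y)\sqcup\{eH\}$, any $H$\hyph{}invariant probability measure decomposes as $\alpha\delta_{eH}+(1-\alpha)\mu'$ with $\mu'\in\prob(\beta(Y))$ also $H$\hyph{}invariant. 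Choosing a transversal $T$ for the free action yields $Y\cong H\times T$ as $H$\hyph{}sets (with $H$ acting on the first factor by left translation), and the map $f\mapsto\bigl((h,t)\mapsto f(h)\bigr)$ defines an $H$\hyph{}equivariant embedding $\ell^\infty(H)\hookrightarrow\ell^\infty(Y)=\cont(\beta(Y))$. Pulling $\mu'$ back along this embedding would produce an $H$\hyph{}invariant mean on $H$, contradicting non-amenability. Hence $\alpha=1$ and $\mu=\delta_{eH}$. The orbit $G\cdot\delta_{eH}=\{\delta_{gH}:g\in G\}$ is identified with $G/H$, which is dense in $\beta(G/H)$ by construction, so $\delta_{eH}$ is $G$\hyph{}contracted to every Dirac mass and Proposition~\ref{prop:traduc} applies.

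For universality, let $K$ be any $(G,H)$\hyph{}irreducible affine flow and pick an $H$\hyph{}fixed point $v\in K$. The assignment $gH\mapsto gv$ is a well-defined $G$\hyph{}equivariant map $G/H\to K$ which, by the universal property of the Stone--\v{C}ech compactification, extends uniquely to a continuous map $\fhi\colon\beta(G/H)\to K$; equivariance persists since it already holds on the dense subset $G/H$. Composing the pushforward $\fhi_*\colon\prob(\beta(G/H))\to\prob(K)$ with the barycentre map $\prob(K)\to K$ yields a continuous affine $G$\hyph{}equivariant map $\prob(\beta(G/H))\to K$. Combined with the previous step, this exhibits $\prob(\beta(G/H))$ as a $(G,H)$\hyph{}irreducible affine flow which $G$\hyph{}maps onto every $(G,H)$\hyph{}irreducible affine flow, so the uniqueness clause in Proposition~\ref{prop:univ} gives $\prob(\beta(G/H))\cong\bnda(G,H)$. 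Taking extreme points of this Bauer simplex produces $\bnd(G,H)\cong\beta(G/H)$.

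The main obstacle is pinning down $\delta_{eH}$ as the unique $H$\hyph{}fixed measure: this is precisely the step where both hypotheses must cooperate. Malnormality is needed to trivialise stabilisers on the complement (otherwise even finite orbits could carry invariant probability measures), while non-amenability is what rules out invariant means on the resulting free $H$\hyph{}set. Once this uniqueness is secured, the remainder is formal: density of $G/H$ in $\beta(G/H)$ handles contraction, and the universal property of $\beta$ handles the extension required for universality.
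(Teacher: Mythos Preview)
Your proof is correct and follows the same two-step strategy as the paper: verify that $\beta(G/H)$ is a $(G,H)$\hyph{}boundary via Proposition~\ref{prop:traduc} (uniqueness of the $H$-fixed measure being the only substantive point), then check universality of $\prob(\beta(G/H))$ via the Stone--\v{C}ech extension of an orbital map composed with the barycentre. The only difference is cosmetic: where you spell out the free-action argument explicitly (transversal, embedding $\ell^\infty(H)\hookrightarrow\ell^\infty(Y)$, pull back the mean), the paper phrases the uniqueness step in terms of means on $G/H$ and invokes a lemma from the literature; the paper also records the argument in the slightly greater generality where $H\cap gHg\inv$ is merely amenable rather than trivial.
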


In a sense, this completely describes the malnormal case for discrete groups; indeed we shall see that when $H$ is amenable $\bnd(G,H)$ reduces again to the classical boundary $\bnd(G)$. For non-discrete groups, an example of a similar nature is given in Example~\ref {exam:Samuel}.

\begin{cor}\label{cor:free}
Let $G=H*H'$ be the free product of two discrete groups $H, H'$. If $H$ is non-amenable, then $\bnd(G,H) \cong \beta(G/H)$.\qed
\end{cor}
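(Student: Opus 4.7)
The plan is to derive this corollary as an immediate consequence of Theorem~\ref{thm:malnormal}. Since $H$ is already assumed non-amenable and $G = H * H'$ is a discrete group, the only hypothesis of Theorem~\ref{thm:malnormal} that needs to be checked is the malnormality of $H$ in $G$. So the task reduces entirely to verifying the standard fact that a free factor in a free product of two groups is malnormal.

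For this, I would use the normal form theorem for free products. Fix $g \in G \setminus H$ and write it uniquely in reduced form $g = x_1 x_2 \cdots x_n$, where each $x_i$ is a non-trivial element of either $H$ or $H'$ and consecutive letters lie in distinct factors. The assumption $g \notin H$ forces at least one letter from $H'$ to appear, so either $n \geq 2$, or $n = 1$ with $x_1 \in H'$. Given $h \in H \setminus \{1\}$, compute $ghg\inv = x_1 \cdots x_n \, h \, x_n\inv \cdots x_1\inv$ and reduce from the middle outward. If $x_n \in H$, then $x_n h x_n\inv$ collapses to a single non-trivial element of $H$, but it is flanked by $x_{n-1} \in H'$ and its inverse, so no further amalgamation occurs; if $x_n \in H'$, no collapse takes place at that step at all. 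Iterating outward, the reduced form of $ghg\inv$ has length either $2n-1$ or $2n+1$, in any case at least $3$. In particular $ghg\inv \notin H$, so $gHg\inv \cap H = \{1\}$, proving malnormality. Applying Theorem~\ref{thm:malnormal} then yields the desired identification $\bnd(G,H) \cong \beta(G/H)$.

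The main obstacle is essentially non-existent here; the only point requiring any attention is making sure the boundary case $n=1$, $x_1 \in H'$ is not overlooked (in that case $ghg\inv = x_1 h x_1\inv$ is already in reduced form of length $3$). Everything else is a textbook manipulation of free-product normal forms.
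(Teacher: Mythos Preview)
Your proof is correct and follows exactly the paper's approach: the paper simply observes that a free factor is malnormal (citing \cite[4.1.5]{Magnus-Karrass-Solitar}) and invokes Theorem~\ref{thm:malnormal}. You supply the normal-form verification of malnormality in place of the citation, which is fine and handles all cases correctly.
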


Indeed $H$ is malnormal in $G$, see~\cite[4.1.5]{Magnus-Karrass-Solitar}. This justifies the claim that the subgroup $H$ of Example~\ref{exam:F4} fixes a point in $\bnd(G,H)$.

\smallskip

We shall in fact prove a slightly stronger version of Theorem~\ref{thm:malnormal}, which has a consequence for the \textbf{wreath product} $J\wr H$ of two discrete groups. We recall that $J\wr H$ is the semi-direct product of the restricted product $\oplus_H J$ with $H$. This includes for instance the lamplighter group on $H$.

\begin{cor}\label{cor:wreath}
If $H$ is non-amenable, then $\bnd(J\wr H, H) \cong \beta\left(\oplus_H J\right)$.
\end{cor}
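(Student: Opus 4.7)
The natural strategy is to reduce to the strengthened version of Theorem~\ref{thm:malnormal} promised in the paragraph above. The first step is to identify the coset space: setting $N = \oplus_H J$ so that $G = J \wr H$ becomes $N \rtimes H$, the map $(n, h) \mapsto n H$ yields a $G$-equivariant bijection $G/H \cong N$, with $G$ acting on $N$ by $(n_0, h_0) \cdot n = n_0 (h_0 \cdot n)$, where $h \cdot n$ denotes the shift action of $H$ on $\oplus_H J$. Hence $\beta(G/H) \cong \beta(\oplus_H J)$ as $G$-spaces, and the conclusion reduces to $\bnd(G, H) \cong \beta(G/H)$.

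Next I would verify the hypothesis of the strengthened theorem. The natural relaxation of malnormality that accommodates wreath products with torsion in $H$ is \emph{almost malnormality}: the intersection $g H g\inv \cap H$ is amenable for every $g \notin H$. A direct computation in the semi-direct product shows that for $g = (n, h_0)$ with $n \ne e$, the intersection $g H g\inv \cap H$ is conjugate to the stabilizer $\Stab_H(n)$ of $n$ under the shift action of $H$ on $N$.

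It then remains to see that $\Stab_H(n)$ is finite for every non-trivial $n \in \oplus_H J$. Since $n$ has finite support, any stabilizing element of $H$ must permute the finite subset $\mathrm{supp}(n) \subseteq H$, so we obtain a homomorphism $\Stab_H(n) \to \mathrm{Sym}(\mathrm{supp}(n))$. This map is injective because $H$ acts freely on itself by left translation: any element of $H$ fixing even one point of $\mathrm{supp}(n)$ must be trivial. Thus $\Stab_H(n)$ embeds in a finite symmetric group and is in particular amenable, so the strengthened Theorem~\ref{thm:malnormal} yields the claim.

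The main uncertainty I anticipate is the exact form of the strengthened theorem. If the relaxation is indeed almost malnormality, the above suffices; otherwise one would have to inspect the proof of Theorem~\ref{thm:malnormal} directly. The key analytic points in the stronger setting are that every $H$-fixed measure on $\beta(G/H)$ is still $G$-contracted to every point --- which should follow from density of the Dirac orbit $\{\delta_{gH} : g \in G\}$ in $\prob(\beta(G/H))$, together with non-amenability of $H$ controlling $H$-invariant measures on the infinite $H$-orbits $H/\Stab_H(n)$ in $G/H \setminus \{eH\}$ --- and that any $(G,H)$-boundary admits a $G$-map from $\beta(G/H)$ via the Stone--\v{C}ech extension of the orbit map attached to an $H$-fixed point.
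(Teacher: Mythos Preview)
Your proof is correct and follows essentially the same route as the paper: apply the strengthened Theorem~\ref{thm:a:malnormal} (whose hypothesis is exactly the amenability of $H\cap gHg\inv$ for $g\notin H$, as you guessed) after checking that $\Stab_H(n)$ is finite for non-trivial $n\in\oplus_H J$ via the finite-support argument. The explicit identification $G/H\cong \oplus_H J$ and the computation reducing $gHg\inv\cap H$ to $\Stab_H(n)$ are left implicit in the paper but are exactly what is needed, so your uncertainty paragraph is unnecessary.
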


Another consequence of this method regards \emph{relatively hyperbolic} groups:

\begin{cor}\label{cor:relhyp}
Let $G$ be a discrete group that is hyperbolic relative to some family of subgroups $\{H_i\}_{i\in I}$.  Then $\bnd(G,H_i) \cong \beta(G/H_i)$ whenever $H_i$ is non-amenable.
\end{cor}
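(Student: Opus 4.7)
The plan is to reduce this to the stronger version of Theorem~\ref{thm:malnormal} that the author promises to prove (and that already gave Corollary~\ref{cor:wreath}). The only input one really needs from relative hyperbolicity is the standard \emph{almost malnormality} of peripheral subgroups: for any $i\in I$ and any $g\in G\setminus H_i$, the intersection $g H_i g\inv \cap H_i$ is finite, and likewise $g H_i g\inv \cap H_j$ is finite for $i\ne j$ and every $g$. This is folklore in the theory of relatively hyperbolic groups (it appears, for instance, in Osin's or Bowditch's foundational treatments), so I would simply cite it rather than re-prove it.

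First I would verify that the strengthening of Theorem~\ref{thm:malnormal} which the paper announces is designed to swallow precisely this weaker hypothesis, namely that $gHg\inv \cap H$ is finite (rather than trivial) for all $g\notin H$. The passage from malnormal to almost malnormal is natural here because the arguments used to identify $\bnd(G,H)$ with $\beta(G/H)$ proceed via $H$-fixed measures on $G$-boundaries, and a finite subgroup of $H$ acts trivially on any strongly proximal flow up to passage to a finite cover of no consequence for the Stone--\v Cech compactification of the coset space. So the statement I want to invoke reads: \emph{if $H<G$ is discrete, non-amenable and almost malnormal, then $\bnd(G,H)\cong \beta(G/H)$}.

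Granting that strengthening, the corollary is immediate: fix $H_i$ in the peripheral family; by hypothesis it is non-amenable, and by the almost malnormality of peripheral structures it satisfies the hypothesis of the strengthened theorem. Applying it yields $\bnd(G,H_i)\cong \beta(G/H_i)$, exactly as claimed.

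The main obstacle is not in this reduction but in the strengthened version of Theorem~\ref{thm:malnormal} itself; once that is in hand, Corollary~\ref{cor:relhyp} reduces to invoking the well-known almost malnormality of peripheral subgroups in a relatively hyperbolic group. A secondary point worth checking is that one does not need any properness or finite generation assumption on $G$ beyond what relative hyperbolicity already entails, so that the identification $\bnd(G,H_i)\cong \beta(G/H_i)$ is literally an identification of $G$-flows and not merely up to a factor.
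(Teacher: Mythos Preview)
Your approach is correct and matches the paper's proof exactly: the paper cites Osin's result that each $H_i$ is almost malnormal and then applies the strengthened Theorem~\ref{thm:a:malnormal} (which only requires $H\cap gHg^{-1}$ to be amenable for $g\notin H$). Your side remark about finite subgroups and finite covers is not the actual mechanism behind the strengthening (the paper instead argues via $H$-invariant means on $G/H\setminus\{*\}$ and non-amenability of $H$), but since you treat the strengthened theorem as a black box, this does not affect the correctness of the reduction.
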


Boundary theory is inseparable from \textbf{amenability}: recall that the topological group $G$ is amenable if and only if its Furstenberg boundary $\bnd(G)$ is trivial. It is therefore not surprising that the relative Furstenberg boundary $\bnd(G, H)$ and its affine version $\bnda(G,H)$ are intimately connected to relative versions of amenability for pairs $H<G$.

There are two complementary such relative notions. First, we can ask when the relative Furstenberg boundary $\bnd(G, H)$ is trivial; the answer is straightforward and was already recorded in~\cite{Bearden-Kalantar_arx}  for discrete groups:

\begin{prop}\label{prop:coamen}
$\bnd(G, H)$ is trivial if and only if $H$ is co-amenable in $G$.
\end{prop}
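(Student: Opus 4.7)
The plan is to play off the defining universal property of $\bnda(G,H)$ from Proposition~\ref{prop:univ} against the standard fixed-point characterisation of co-amenability: $H$ is co-amenable in $G$ iff every affine $G$-flow that admits an $H$-fixed point also admits a $G$-fixed point (the Eymard-type definition, which the paper presumably takes as the working definition in the continuous setting).

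For the easy direction, suppose $H$ is co-amenable in $G$. The affine $G$-flow $\bnda(G,H)$ is by construction a $(G,H)$-flow, so it carries an $H$-fixed point. By co-amenability, it then carries a $G$-fixed point $y$. The singleton $\{y\}$ is an affine $(G,H)$-subflow of $\bnda(G,H)$; irreducibility forces $\bnda(G,H)=\{y\}$. Hence $\bnd(G,H)=\ext(\bnda(G,H))$ is a single point.

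For the converse, suppose $\bnd(G,H)$ is trivial, so that $\bnda(G,H)=\prob(\bnd(G,H))$ is a one-point space. Let $K$ be any affine $G$-flow with an $H$-fixed point. A standard Zorn argument furnishes a minimal element among the non-empty closed affine $G$-subflows of $K$ that still contain an $H$-fixed point: the key point is that along a decreasing chain of such subflows $\{K_\alpha\}$, the compact sets $\{K_\alpha^H\}$ of $H$-fixed points form a decreasing chain of non-empty closed sets, and the finite intersection property in $K$ yields $\bigcap_\alpha K_\alpha^H = (\bigcap_\alpha K_\alpha)^H \neq \emptyset$. Such a minimal subflow $K'\se K$ is by definition $(G,H)$\hyph{}irreducible. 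By Proposition~\ref{prop:univ} there is a $G$-morphism $\bnda(G,H)\to K'$, and since its domain is a singleton, its image is a $G$-fixed point in $K'\se K$. Thus every affine $G$-flow with an $H$-fixed point has a $G$-fixed point, so $H$ is co-amenable in $G$.

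The only step that is not a formal consequence of the universal property is the Zorn application, and even this is routine once one tracks $H$-fixed points through limits; the proposition therefore merely repackages the classical notion of co-amenability as the triviality of a single canonical object.
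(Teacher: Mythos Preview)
Your argument is correct. The forward direction is exactly what the paper would do, and in the converse you combine the Zorn step (which is the paper's Lemma~\ref{lem:exists:min}) with the universal property to produce a $G$-fixed point; this is precisely the content of Remark~\ref{rem:univ} specialised to the trivial $\bnda(G,H)$.

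The only difference is one of packaging. The paper does not prove Proposition~\ref{prop:coamen} directly but obtains it as the special case $H_1=H$, $H_2=G$ of the more general framework for relative co-amenability: Proposition~\ref{prop:relco} characterises ``$H$ co-amenable to $H'$ relative to $G$'' via the existence of a $G$-morphism $\bnda(G,H')\to\bnda(G,H)$, and Corollaries~\ref{cor:same:bnd} and~\ref{cor:nested} then identify when two relative boundaries coincide. Your direct proof avoids that detour (in particular you do not need Corollary~\ref{cor:no:endo}), at the cost of not exhibiting the statement as an instance of a broader phenomenon. Both routes rest on the same two ingredients: Lemma~\ref{lem:exists:min} and the universality of $\bnda(G,H)$.
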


The notion of co-amenability extends to arbitrary pairs the notion of amenability of the quotient group $G/H$ when $H\lhd G$ is a normal subgroup. For discrete groups, it is equivalent to the existence of a $G$-invariant mean on $G/H$, and more generally in the locally compact case to the weak containment of the trivial $G$-representation in the quasi-regular representation on $L^2(G/\ol{H})$, see~\cite{Eymard72}. For arbitrary topological groups it is defined by requiring that every affine $(G, H)$-flow has a $G$-fixed point and the above proposition is expected. Nonetheless there are some subtleties, such as the following direct consequence of a construction in~\cite{Monod-Popa} or~\cite{Pestov}.

\begin{prop}\label{prop:MP}
There is a discrete group $G$ with subgroups $H_1 < H_2 < G$ such that $\bnd(G, H_1)$ and $\bnd(G, H_2)$ are trivial but not $\bnd(H_2, H_1)$.

Moreover we can choose $H_1$ normal in $H_2$ and  $H_2$ normal in $G$.
\end{prop}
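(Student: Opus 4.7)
The plan is to reduce the proposition entirely to the translation provided by Proposition~\ref{prop:coamen} and then invoke a known failure of transitivity of co-amenability. By Proposition~\ref{prop:coamen}, the three required assertions become, respectively, that $H_1$ is co-amenable in $G$, that $H_2$ is co-amenable in $G$, and that $H_1$ is \emph{not} co-amenable in $H_2$. Under the normality assumptions $H_1 \lhd H_2 \lhd G$, the second condition reduces to amenability of the quotient $G/H_2$ and the third to non-amenability of the quotient $H_2/H_1$. Crucially, the first condition does \emph{not} reduce to amenability of any quotient, since $H_1$ need not be normal in $G$; this is what makes the configuration possible at all, as otherwise $H_2/H_1$ would be a subgroup of the amenable group $G/H_1$ and hence amenable.

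So the task is to exhibit a discrete group $G$ with normal subgroups $H_1 \lhd H_2 \lhd G$ such that $G/H_2$ is amenable, $H_2/H_1$ is non-amenable, and $H_1$ is co-amenable in $G$. Such a configuration is precisely a witness to the non-transitivity of co-amenability: $H_2$ is co-amenable in $G$ and $H_1$ is co-amenable in $G$, yet $H_1$ is not co-amenable in the intermediate group $H_2$. Examples realising this were constructed by Monod--Popa and independently by Pestov, and it suffices to cite either construction.

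Once such a triple $(G, H_1, H_2)$ is fixed, the proof is finished by applying Proposition~\ref{prop:coamen} three times: it gives $\bnd(G,H_1) = \bnd(G,H_2) = \{*\}$ from the two co-amenability statements, and it gives $\bnd(H_2,H_1) \ne \{*\}$ from the failure of co-amenability of $H_1$ in $H_2$. I expect no genuine obstacle: the only point to check is that the references' constructions can indeed be arranged with $H_1$ normal in $H_2$ and $H_2$ normal in $G$, but this is already the form in which the examples are presented.
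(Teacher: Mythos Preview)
Your proposal is correct and follows essentially the same route as the paper: reduce to the co-amenability characterisation (the paper cites Corollary~\ref{cor:nested}, of which your Proposition~\ref{prop:coamen} is a special case) and then invoke the Monod--Popa / Pestov construction of $H_1\lhd H_2\lhd G$ with $H_1$ co-amenable in $G$ but not in $H_2$. Your additional unpacking in terms of amenability of $G/H_2$ and non-amenability of $H_2/H_1$ is accurate and helpful, but not strictly needed.
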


At the opposite end from co-amenability, we get the other relative notion by asking when $\bnd(G,H)$ is isomorphic (as $G$-flow) to $\bnd(G)$.

\begin{prop}\label{prop:relamen}
$\bnd(G, H) \cong \bnd(G) $ if and only if $H$ is amenable relative to $G$.
\end{prop}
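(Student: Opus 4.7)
The plan hinges on one key equivalence: $H$ is amenable relative to $G$ if and only if the simplex $\bnda(G) = \prob(\bnd(G))$ admits an $H$-fixed point. I would establish this first, then deduce both directions of the proposition from it. The forward implication of the equivalence is immediate, since relative amenability by definition gives an $H$-fixed point in every affine $G$-flow, and $\bnda(G)$ is one. For the reverse, given any affine $G$-flow $K$, Zorn's lemma produces a minimal affine $G$-subflow $K' \se K$, which is automatically $G$-irreducible; by Proposition~\ref{prop:univ} applied with trivial subgroup there is a $G$-morphism $\pi \colon \bnda(G) \to K'$, and the image under $\pi$ of any $H$-fixed point of $\bnda(G)$ is an $H$-fixed point of $K$.

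For the ($\Leftarrow$) direction of the proposition, assume $H$ is relatively amenable. Then $\bnda(G)$ is $G$-irreducible and carries an $H$-fixed point, hence is $(G,H)$-irreducible. I would then upgrade this to the universal property of $\bnda(G,H)$: given any $(G,H)$-irreducible affine flow $K$, pick a minimal affine $G$-subflow $K' \se K$ (which is $G$-irreducible) and use universality of $\bnda(G)$ to obtain a $G$-morphism $\bnda(G) \to K'$; the image is an affine $G$-subflow of $K$ that contains the push-forward of any $H$-fixed point of $\bnda(G)$, so it is a $(G,H)$-affine subflow of $K$. By $(G,H)$-irreducibility of $K$, this image must equal $K$, yielding a $G$-morphism $\bnda(G) \to K$. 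Uniqueness in Proposition~\ref{prop:univ} then gives $\bnda(G,H) \cong \bnda(G)$, and passing to extremal points yields $\bnd(G,H) \cong \bnd(G)$.

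For the ($\Rightarrow$) direction, any isomorphism $\bnd(G,H) \cong \bnd(G)$ of $G$-flows induces an isomorphism $\bnda(G,H) \cong \bnda(G)$ on probability simplices. Since $\bnda(G,H)$ admits an $H$-fixed point by definition, so does $\bnda(G)$, and the preliminary equivalence yields relative amenability of $H$.

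The main obstacle is the universality upgrade in ($\Leftarrow$): one must carefully separate the notions of $G$-irreducible and $(G,H)$-irreducible. A $(G,H)$-irreducible $K$ need not be $G$-irreducible, but it always contains a $G$-irreducible affine subflow by Zorn, and relative amenability of $H$ is precisely what promotes this subflow to a full $(G,H)$-affine subflow, thereby forcing it to recover all of $K$.
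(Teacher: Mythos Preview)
Your argument is correct. The preliminary equivalence you isolate is exactly the special case $H=1$ of Proposition~\ref{prop:relco}, and your proof of it matches the paper's. The $(\Rightarrow)$ direction is also handled the same way.

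For $(\Leftarrow)$ you take a slightly different route from the paper. The paper deduces this proposition from the general framework of relative co-amenability: it proves (Proposition~\ref{prop:relco} and Corollary~\ref{cor:same:bnd}) that $\bnda(G,H)\cong\bnda(G,H')$ whenever there exist $G$\hyph{}morphisms in both directions, invoking the rigidity of Corollary~\ref{cor:no:endo} to upgrade mutual morphisms to an isomorphism; then it specialises to $H'=1$ via Corollary~\ref{cor:nested}. You instead bypass the rigidity argument by verifying directly that $\bnda(G)$ enjoys the full universal property of $\bnda(G,H)$: given any $(G,H)$\hyph{}irreducible $K$, you pass to a $G$\hyph{}irreducible subflow $K'$, map $\bnda(G)$ onto it, and then use relative amenability to see that $K'$ already carries an $H$\hyph{}fixed point and hence equals $K$. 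This is a clean shortcut tailored to the case $H'=1$; the paper's route is more general (it yields Corollary~\ref{cor:same:bnd} for arbitrary pairs $H,H'$) but requires the extra input of Corollary~\ref{cor:no:endo}.
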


We recall that $H$ is called \textbf{amenable relative to $G$} if every affine $G$-flow has an $H$-fixed point. Although it is clearly complementary to co-amenability, this notion comes with a surprise. For discrete groups, it simply amounts to the amenability of the group $H$. But already in the locally compact setting, it is a priori weaker than the amenability of $H$, see~\cite{Caprace-Monod_rel}. It remains an open problem to exhibit a locally compact example where the notions do not coincide. Finally, beyond locally compact groups, there is a complete divergence. Indeed, any subgroup of an amenable group will be relatively amenable, but need not be amenable. The first example is from 1973, when la Harpe~\cite{Harpe73} showed that the unitary group of the separable Hilbert space in the strong operator topology is an amenable Polish group, whereas it contains any countable group as a discrete closed subgroup, including the non-amenable ones, as witnessed by the regular representation.

\medskip

In fact these two relative notions are just extreme cases of a very basic pre-order relation on the set of all subgroups of a given group $G$; this definition is taken from~\cite[\S2.3]{Portmann_PhD} and~\cite[\S7.C]{Caprace-Monod_rel}.

\begin{defi}\label{defi:relco}
Let $H, H'$ be subgroups of a topological group $G$. We say that \textbf{$H$ is co-amenable to $H'$ relative to $G$} if every affine $(G,H)$-flow has an $H'$-fixed point.
\end{defi}

We see that a co-amenable subgroup $H$ corresponds to the special case $H=G$, whilst a relatively amenable subgroup $H'$ corresponds to $H=1$.

Even for discrete groups, this notion has clarifying virtues. For instance, the situation of Proposition~\ref{prop:MP} can be rephrased by saying that $H_1 \lhd H_2$ is co-amenable to $H_2$ relative to $G$, even though it is no co-amenable \emph{in} $H_2$.

\section{Irreducible affine flows}
Let $G$ be a topological group and $H<G$ a subgroup.

We first comment on the fact that an affine $G$-flow $K$ and $G$\hyph{}morphisms were defined intrinsically on $K$ rather than on some locally convex topological vector space containing $K$. There is no loss of generality in assuming that the $G$-action on $K$ actually comes from a representation by continuous linear operators on the ambient space, although this might require us to modify that ambient space without changing $K$. Indeed we can embed $K$ in the state space on the continuous affine functions on $K$. However, our focus will always be on $K$ only. A similar remark applies to morphisms.

\smallskip
A straightforward compactness argument with Zorn's lemma and the fact that fixed points constitute a closed convex subset gives the following.

\begin{lem}\label{lem:exists:min}
Every affine $(G,H)$-flow contains a $(G,H)$\hyph{}irreducible one.\qed
\end{lem}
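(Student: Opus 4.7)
The plan is to apply Zorn's lemma to the collection $\mathcal{C}$ of affine $(G,H)$-subflows of the given affine $(G,H)$-flow $K$, partially ordered by inclusion, and produce a minimal element. Such a minimal element is by definition $(G,H)$-irreducible, so the entire task reduces to verifying that every descending chain in $\mathcal{C}$ admits a lower bound in $\mathcal{C}$.

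Given such a descending chain $\{K_\alpha\}_{\alpha \in A}$, I would simply set $K_\infty := \bigcap_\alpha K_\alpha$. Being an intersection of closed convex $G$-invariant subsets of the ambient compact convex set $K$, this $K_\infty$ is automatically closed, convex, and $G$-invariant. The only nontrivial point is to exhibit an $H$-fixed point inside it, which in particular will ensure that $K_\infty$ is non-empty.

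For this I would exploit the observation, already flagged in the paper, that the $H$-fixed point set $K_\alpha^H$ is a non-empty closed convex subset of $K_\alpha$: non-empty because $K_\alpha \in \mathcal{C}$, closed because $H$ acts by homeomorphisms, and convex because $H$ acts by affine maps. The family $\{K_\alpha^H\}$ is then a descending chain of non-empty closed subsets of the compact space $K$, so by the finite intersection property $\bigcap_\alpha K_\alpha^H \neq \emptyset$. Since this intersection coincides with $K_\infty^H$, we conclude that $K_\infty$ contains an $H$-fixed point and hence lies in $\mathcal{C}$, providing the desired lower bound.

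There is essentially no obstacle here, as the author signals by calling the argument ``straightforward''; the only subtle point is that non-emptiness of $K_\infty$ must be established by intersecting the fixed-point sets $K_\alpha^H$ rather than the flows themselves, in order to guarantee that the lower bound actually lies in $\mathcal{C}$ and not merely in the larger class of affine $G$-subflows.
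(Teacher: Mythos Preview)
Your argument is correct and matches the paper's intended proof exactly: Zorn's lemma on the poset of affine $(G,H)$-subflows, with the lower bound for a descending chain obtained by intersecting and using compactness on the closed sets $K_\alpha^H$ to ensure an $H$-fixed point survives. This is precisely the ``straightforward compactness argument with Zorn's lemma and the fact that fixed points constitute a closed convex subset'' that the paper invokes before the \qed.
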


On the other hand, the definitions imply:

\begin{lem}\label{lem:onto}
Every $G$\hyph{}morphism from an affine $(G,H)$-flow to a $(G,H)$\hyph{}irreducible one is onto.\qed
\end{lem}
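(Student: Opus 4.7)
The plan is to verify that the image $\phi(K)$ of a $G$\hyph{}morphism $\phi\colon K \to L$, where $K$ is an affine $(G,H)$\hyph{}flow and $L$ is $(G,H)$\hyph{}irreducible, is itself an affine $(G,H)$\hyph{}subflow of $L$; then irreducibility of $L$ will force $\phi(K) = L$.

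First, I would check each of the structural properties of $\phi(K) \subseteq L$. Since $\phi$ is continuous and $K$ is compact, $\phi(K)$ is compact. Since $\phi$ is affine and $K$ is convex, $\phi(K)$ is convex. Since $\phi$ is $G$\hyph{}equivariant, $\phi(K)$ is $G$\hyph{}invariant, hence carries an affine $G$-action induced from $L$. Finally, because $K$ is an affine $(G,H)$\hyph{}flow it contains some $H$\hyph{}fixed point $k_0$, and then $\phi(k_0) \in \phi(K)$ is $H$\hyph{}fixed in $L$ by equivariance. So $\phi(K)$ is a non-empty affine $(G,H)$\hyph{}flow sitting inside $L$.

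Now the $(G,H)$\hyph{}irreducibility of $L$ means that $L$ admits no proper affine $(G,H)$\hyph{}subflow, so we conclude $\phi(K) = L$. There is no real obstacle here: the only point worth flagging is the implicit convention that the ambient locally convex topology on $L$ (or on any space in which $L$ is embedded) makes the affine and topological structure on $\phi(K)$ agree with the quotient structure coming from $K$, which is the remark already made in the paragraph preceding Lemma~\ref{lem:exists:min} about embedding into the state space of continuous affine functions on $L$. With that in hand the argument is a one-line verification.
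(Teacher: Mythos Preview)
Your proof is correct and is precisely the intended argument: the paper simply records the lemma with a \qed{} after the phrase ``the definitions imply'', and what you wrote is exactly the unpacking of that phrase. There is nothing to add.
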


Given a convex compact set $K$, we denote the set of its \textbf{extremal points} by $\ext(K)$. We recall that $\ext(K)$ can variously be closed, or dense~\cite{Poulsen}, or non-Borel~\cite[\S VII]{Bishop-deLeeuw}.

The following reformulation is essentially an exercice around the Krein--Milman theorem.

\begin{lem}\label{lem:KM}
Let $K$ be an affine $(G,H)$-flow. Then
\begin{equation*}
\text{$K$ is $(G,H)$\hyph{}irreducible} \kern2mm\Longleftrightarrow \kern2mm \forall x\in K^H : \kern2mm \ext(K) \se \ol{G x}.
\end{equation*}
\end{lem}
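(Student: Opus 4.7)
The plan is to handle each implication by routine invocations of the Krein--Milman theorem and its partial converse due to Milman.

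For the forward direction, I assume $K$ is $(G,H)$-irreducible and fix some $x\in K^H$. I would form $L=\ol{\mathrm{conv}}\bigl(\ol{Gx}\bigr)$, the closed convex hull of the orbit closure of $x$. This is a closed convex $G$-invariant subset of $K$; since $x\in L$ is itself $H$-fixed, $L$ is an affine $(G,H)$-flow. By irreducibility, $L=K$. Now I invoke Milman's partial converse to Krein--Milman: if $K=\ol{\mathrm{conv}}(S)$ for a closed set $S$, then $\ext(K)\se S$. Applied with $S=\ol{Gx}$ (closed because $K$ is compact), this yields $\ext(K)\se\ol{Gx}$, as required.

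For the reverse direction, I assume the extremal inclusion and take any affine $(G,H)$-subflow $L\se K$. By definition $L$ has some $H$-fixed point $x$, which lies in $K^H$. The hypothesis gives $\ext(K)\se\ol{Gx}$, and since $L$ is closed and $G$-invariant, $\ol{Gx}\se L$; hence $\ext(K)\se L$. Because $L$ is convex and closed, the Krein--Milman theorem gives $K=\ol{\mathrm{conv}}(\ext(K))\se L$, so $L=K$ and $K$ is $(G,H)$-irreducible.

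The proof is genuinely routine once one has the right closed convex $G$-invariant candidate $\ol{\mathrm{conv}}(\ol{Gx})$ in hand; the only subtlety worth a sentence is that the quantifier ``$\forall x\in K^H$'' is not vacuous, since by definition an affine $(G,H)$-flow is non-empty and has an $H$-fixed point, so in particular $K^H\neq\emptyset$ on both sides of the equivalence. The main (mild) obstacle is simply remembering to apply Milman's converse rather than Krein--Milman in the forward direction, because one needs an inclusion of $\ext(K)$ into a specified closed set, not merely into the closed convex hull of one.
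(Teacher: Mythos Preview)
Your proof is correct and follows essentially the same approach as the paper's: both directions use exactly the same ingredients in the same order, namely forming the closed convex hull of $\ol{Gx}$ and invoking Milman's partial converse for the forward implication, and applying Krein--Milman to $\ext(K)\se L$ for the reverse. Your write-up is slightly more detailed (e.g.\ the remark on $K^H\neq\emptyset$), but there is no substantive difference.
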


\begin{proof}
The condition on the right is sufficient. Indeed, if $L\se K$ is an affine $(G,H)$-flow, it contains an $H$-fixed point and hence contains $\ext(K)$. Thus $L=K$ by Krein--Milman.

Conversely, suppose that $K$ is $(G,H)$\hyph{}irreducible and let $x\in K^H$. Then the closed convex hull of $\ol{G x}$ is $K$ and hence $\ol{G x}$ contains $\ext(K)$ by Milman's partial converse to Krein--Milman~\cite[V.8.5]{Dunford-Schwartz_I}.
\end{proof}

\begin{lem}\label{lem:coincidence}
Let $K,L$ be affine $(G,H)$-flows and let $f, f'\colon K\to L$ be $G$\hyph{}morphisms. Consider the coincidence set $K_0=\{x\in K : f(x) = f'(x)\}$.

If $L$ is $(G,H)$\hyph{}irreducible, then $f(K_0)=L$.
\end{lem}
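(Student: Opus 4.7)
My plan is to exhibit, for every extremal point of $L$, a preimage under $f$ that also lies in $K_0$; once $\ext(L)\subseteq f(K_0)$ is established, Krein--Milman finishes the job, since $f(K_0)$ is a continuous affine image of the compact convex $K_0$, hence closed and convex.

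To begin, I would record the free information: $K_0$ is closed (as $f,f'$ are continuous into the Hausdorff space $L$), convex (as $f,f'$ are affine), and $G$-invariant (as both $f,f'$ are $G$-morphisms). Next, since $K$ is an affine $(G,H)$-flow, I pick some $x\in K^H$; then $f(x),f'(x)\in L^H$, and so is the midpoint
\begin{equation*}
y \;=\; \tfrac12\bigl(f(x)+f'(x)\bigr)\in L^H.
\end{equation*}
This averaging step is the point of the argument: it bundles the two maps into a single $H$-fixed point of $L$ to which the irreducibility hypothesis can be applied. Lemma~\ref{lem:KM} then gives $\ext(L)\subseteq\overline{G y}$.

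Now fix $\ell\in\ext(L)$ and choose a net $(g_i)$ in $G$ with $g_i y\to\ell$. Using equivariance and affineness,
\begin{equation*}
g_i y \;=\; \tfrac12\bigl(f(g_i x)+f'(g_i x)\bigr).
\end{equation*}
By compactness of $K$ and $L$ I pass to a subnet so that $g_i x\to z\in K$, $f(g_i x)\to a\in L$ and $f'(g_i x)\to b\in L$; continuity forces $a=f(z)$, $b=f'(z)$, and $(a+b)/2=\ell$. Extremality of $\ell$ in the convex set $L$ then compels $a=b=\ell$, so $z\in K_0$ and $f(z)=\ell$. Hence $\ext(L)\subseteq f(K_0)$, and the Krein--Milman theorem yields $f(K_0)=L$.

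The only substantive step is identifying the right ``test point'' to feed into Lemma~\ref{lem:KM}: neither $f(x)$ nor $f'(x)$ alone would let extremality close the argument, whereas their midpoint does precisely because an extremal point of $L$ has a unique decomposition as an average of two points of $L$. Everything else is standard compactness and continuity.
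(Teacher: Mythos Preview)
Your argument is correct and follows the paper's proof essentially verbatim: take the midpoint of $f(x)$ and $f'(x)$ for some $x\in K^H$, apply Lemma~\ref{lem:KM} to bring a net toward a given extremal point, pass to a subnet so that $g_i x$ converges, and use extremality to force coincidence. Your explicit remark that $K_0$ is compact convex (hence $f(K_0)$ is closed convex, so Krein--Milman indeed applies) is a detail the paper leaves implicit.
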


The point here is that we do not know a priori that $f(K_0)$ contains an $H$-fixed point (or even is non-empty).

\begin{proof}
  By Krein--Milman, it suffices to prove that every extremal point $\zeta$ of $L$ belongs to $f(K_0)$. Let $x\in K^H$. Then $f(x)$ and $f'(x)$ belong to $L^H$; hence so does $z= (f(x) + f'(x))/2$. By Lemma~\ref{lem:KM}, there is a net $(g_\alpha)_{\alpha\in A}$ in $G$ such that $g_\alpha z \to \zeta$. Upon replacing $(g_\alpha)$ by a subnet, $g_\alpha x$ converges to some $y\in K$. Now $\zeta$ is the limit of
  \begin{equation*}
    g_a z = g_a \tfrac12 (f(x) + f'(x)) =  \tfrac12 (f(g_a  x) + f'(g_a  x)) \to   \tfrac12 (f(y) + f'(y) ) .
  \end{equation*}
Since $\zeta$ is extremal, we deduce that $f(y)=f'(y)=\zeta$, which witnesses that $\zeta\in f(K_0)$.
\end{proof}

If we apply Lemma~\ref{lem:coincidence} to $K=L$ and $f$ the identity, we deduce the following.

\begin{cor}\label{cor:no:endo}
Let $K$ be a $(G,H)$\hyph{}irreducible affine flow. Then the identity is the only $G$\hyph{}morphism $K\to K$.\qed
\end{cor}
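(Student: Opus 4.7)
The plan is to apply Lemma~\ref{lem:coincidence} directly, taking $L=K$ and $f=\mathrm{Id}_K$. Let $f' \colon K \to K$ be an arbitrary $G$-morphism. The coincidence set is
\begin{equation*}
K_0 = \{x \in K : x = f'(x)\} = \mathrm{Fix}(f'),
\end{equation*}
and since $K$ is $(G,H)$-irreducible, Lemma~\ref{lem:coincidence} gives $\mathrm{Id}(K_0) = K$, i.e.\ $K_0 = K$. Hence $f'(x) = x$ for every $x \in K$, so $f' = \mathrm{Id}$.

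The only thing one needs to check is that the hypotheses of Lemma~\ref{lem:coincidence} are satisfied, namely that $f = \mathrm{Id}$ and $f'$ are both $G$-morphisms of affine $(G,H)$-flows from $K$ to $K$. This is immediate: $K$ is itself an affine $(G,H)$-flow by assumption, the identity is trivially a $G$-morphism, and $f'$ is one by hypothesis. There is no genuine obstacle here — the content has already been packaged into the coincidence lemma, whose subtle point (that $f(K_0)$ need not obviously contain an $H$-fixed point) was handled there by averaging $x$ and $f'(x)$ to produce the $H$-fixed point $z = (x + f'(x))/2$ and exploiting extremality of the limit.
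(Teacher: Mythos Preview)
Your proof is correct and follows exactly the paper's own argument: apply Lemma~\ref{lem:coincidence} with $K=L$ and $f=\Id_K$, so that $K_0=\mathrm{Fix}(f')$ and the conclusion $f(K_0)=K$ forces $f'=\Id$. The additional commentary you give is accurate but not needed for the corollary itself.
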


We now establish the existence of the universal $(G,H)$\hyph{}irreducible affine flow $\bnda(G,H)$.

\begin{proof}[Proof of Proposition~\ref{prop:univ}]
Let $\{K_i\}_{i\in I}$ a family of isomorphism representatives of all $(G,H)$\hyph{}irreducible affine flows. Such a set indeed exists, see Remark~\ref{rem:cardi} below. The product $\prod_{i\in I} K_i$ has $H$-fixed points and hence it contains an $(G,H)$\hyph{}irreducible affine subflow $K$ by Lemma~\ref{lem:exists:min}. The coordinate projections provide $G$\hyph{}morphisms to all $(G,H)$\hyph{}irreducible affine flows. These morphisms are onto by Lemma~\ref{lem:onto}.

Suppose now that $K'$ is another $(G,H)$\hyph{}irreducible affine flow with this property. Then we have $G$\hyph{}morphisms $K\to K'$ and $K'\to K$. Applying Corollary~\ref{cor:no:endo} to the composition of these morphisms in the two possible orders shows that they are isomorphisms, and then that they are unique. This completes the proof of Proposition~\ref{prop:univ}.
\end{proof}

\begin{rem}\label{rem:cardi}
  Given $G$, we can bound the cardinal of an arbitrary $(G,H)$\hyph{}irreducible affine flow $K$ using basic cardinal functions from general topology such as \emph{density} and \emph{weight}, for which we refer to~\cite[a-3]{Hart-Nagata-Vaughan}. For instance, the cardinal of $K$ is bounded by the exponential of its weight, which is bounded by the exponential of its density. The density of $K$ is bounded by  the density of $\ext(K)$ (as soon as the latter is infinite), which is bounded by the density of $G$ in view of Lemma~\ref{lem:KM}. Now that the cardinal of $K$ is bounded, we also have a bound on all possible structures of affine $G$-flow on $K$.
\end{rem}

\begin{rem}\label{rem:univ}
Combining universality with Lemma~\ref{lem:exists:min}, we also see that $\bnda(G, H)$ admits a $G$\hyph{}morphism to every affine $(G,H)$-flow. 
\end{rem}

A standard observation is that whenever a $G$-object is unique up to \emph{unique isomorphisms}, it automatically inherits an action of the automorphism group $\Aut(G)$. This fact was used by Furstenberg for the boundary $\bnd(G)$, see~\cite[II.4.3]{Glasner_LNM}.

In the present case, this holds for the automorphisms that preserve the subgroup $H$. We denote by $\Aut(G,H)<\Aut(G)$ the group of these automorphisms. Notice that the pre-image of $\Aut(G,H)$ in $G$ under the conjugation homomorphism $G\to\Aut(G)$ is precisely the normaliser $\norma_G(H)$ of $H$ in $G$.

\begin{cor}\label{cor:bnda:extends}
The automorphism group $\Aut(G,H)$ has an affine action on $\bnda(G, H)$ such that the resulting action of $\norma_G(H)$ induced by $G\to\Aut(G)$ coincides with the one given by the original $G$-action on $\bnda(G, H)$.\qed
\end{cor}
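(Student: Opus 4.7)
The plan is to use the standard twisting trick together with universality. Given $\alpha \in \Aut(G,H)$, I define a new affine $G$\hyph{}action on $\bnda(G,H)$ by $g \cdot_\alpha x := \alpha(g) x$. Because $\alpha(H) = H$, the $H$\hyph{}fixed points under this twisted action coincide with those for the original action, and more generally a closed convex subset is $G$\hyph{}invariant for one action if and only if it is for the other. Consequently, $\bnda(G,H)$ endowed with $\cdot_\alpha$ is again a $(G,H)$\hyph{}irreducible affine flow, so Proposition~\ref{prop:univ} (applied to the original $\bnda(G,H)$ as universal object) yields a unique $G$\hyph{}morphism $\Phi_\alpha \colon \bnda(G,H) \to \bnda(G,H)_\alpha$. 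By definition, $\Phi_\alpha$ is a continuous affine map satisfying $\Phi_\alpha(gx) = \alpha(g) \Phi_\alpha(x)$.

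Next I would verify that $\alpha \mapsto \Phi_\alpha$ is a group homomorphism into the affine self\hyph{}homeomorphisms of $\bnda(G,H)$. The composition $\Phi_{\alpha^{-1}} \circ \Phi_\alpha$ is a $G$\hyph{}endomorphism of $\bnda(G,H)$ with its original action, hence equals the identity by Corollary~\ref{cor:no:endo}; so each $\Phi_\alpha$ is an affine homeomorphism. A short direct check shows that $\Phi_\alpha \circ \Phi_\beta$ is a $G$\hyph{}morphism from the original action to the $\cdot_{\alpha\beta}$\hyph{}action; the uniqueness clause of Proposition~\ref{prop:univ} then forces $\Phi_{\alpha\beta} = \Phi_\alpha \circ \Phi_\beta$. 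This gives the desired affine action of $\Aut(G,H)$ on $\bnda(G,H)$.

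To conclude, I would verify the compatibility with inner automorphisms. For $n \in \norma_G(H)$, let $\alpha_n$ denote conjugation by $n$. The translation $x \mapsto nx$ is a continuous affine bijection satisfying $n(gx) = (ngn\inv)(nx) = \alpha_n(g)(nx)$, so it is a $G$\hyph{}morphism from $\bnda(G,H)$ to $\bnda(G,H)_{\alpha_n}$; by uniqueness it coincides with $\Phi_{\alpha_n}$, which is precisely the compatibility asserted in the statement. There is no real obstacle here: the only step that is not completely formal is checking that the twist remains $(G,H)$\hyph{}irreducible, and this is immediate from $\alpha(H) = H$; everything else is extracted from Proposition~\ref{prop:univ} and Corollary~\ref{cor:no:endo}.
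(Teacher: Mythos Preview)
Your proof is correct and follows exactly the paper's approach: twist the action by $\alpha$, then use the uniqueness of the universal object to obtain $\Phi_\alpha$, and conclude via Corollary~\ref{cor:no:endo}. One small refinement: to invoke the uniqueness clause of Proposition~\ref{prop:univ} as you do, you should note that the twisted flow is not merely $(G,H)$\hyph{}irreducible but again \emph{universal} (since twisting by $\alpha\in\Aut(G,H)$ permutes the class of $(G,H)$\hyph{}irreducible affine flows and carries $G$\hyph{}morphisms to $G$\hyph{}morphisms); alternatively, the identity $\Phi_{\alpha\beta}=\Phi_\alpha\circ\Phi_\beta$ follows from Corollary~\ref{cor:no:endo} by the same trick you used for invertibility.
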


Indeed, given $\alpha\in \Aut(G)$, we have a new $G$-action on $\bnda(G, H)$ given by $\alpha$-twisting the original action. This turns $\bnda(G, H)$ into a universal affine $(G,H)$-flow if $\alpha\in \Aut(G,H)$. The unique isomorphism identifying this new $(G,H)$-flow $\bnda(G, H)$ with the original one defines the action of $\alpha$. The argument in~\cite[II.4.3]{Glasner_LNM} can be copied verbatim.

\begin{rem}\label{rem:morph:pairs}
We can also consider more generally morphisms of pairs of groups $H<G$ and $H'<G'$. Let thus $f\colon G' \to G$ be a continuous group homomorphism such that $f(H')\se H$. Then every affine $(G,H)$-flow becomes an affine $(G',H')$-flow by pull-back. By Remark~\ref{rem:univ}, there exists an $f$-equivariant morphism $\bnda(G', H')\to \bnda(G, H)$. We shall see in Remark~\ref{rem:no-morph:pairs} below that the corresponding fact does not hold for $\bnd(G', H')$ and $\bnd(G, H)$.
\end{rem}

\section{Back to topological flows}
By definition, the functor $X\mapsto \prob(X)$ sends $G$-flows to affine $G$-flows and $(G,H)$\hyph{}boundaries to $(G,H)$\hyph{}irreducible affine flows. What is less clear is to which extent one can go in the other direction. Specifically, it is not clear how to obtain a $(G,H)$\hyph{}boundary from a general $(G,H)$\hyph{}irreducible affine flow. This contrasts with the classical case, where the closure $\ol{\ext(K)}$ is a $G$\hyph{}boundary for every $G$\hyph{}irreducible affine flow~\cite[III.2.3]{Glasner_LNM}.

Nonetheless, much more is true for the universal $(G,H)$\hyph{}irreducible affine flow $\bnda(G,H)$, since we will prove that it is a Bauer simplex. 

\medskip
Before starting, we recall that one can always homeomorphically identify $X$ with its image in $\prob(X)$. Moreover, if $Y$ is a closed subspace of the compact space $X$, then the Tietze--Urysohn extension theorem realizes $\prob(Y)$ as a convex compact subspace of $\prob(X)$. Finally, recall that for every convex compact $K$ there is a \textbf{barycentre map} $\beta\colon \prob(K) \to K$ given by integrating the measures.

\begin{proof}[Proof of Theorem~\ref{thm:Bauer}]
Notice that $\prob(\bnda(G,H))$ is an affine $(G,H)$-flow since $\bnda(G,H)$ and hence $\prob(\bnda(G,H))$ contains an $H$-fixed point. Thus, applying Lemma~\ref{lem:exists:min}, there is a $(G,H)$\hyph{}irreducible subflow $L$ in $\prob(\bnda(G,H))$. Now the barycentre map $\beta$ associated to $\bnda(G,H)$ induces a $G$\hyph{}morphism $L\to \bnda(G,H)$. This is an isomorphism by the universality of $\bnda(G,H)$.

Let now $x$ be any extremal point of $\bnda(G,H)$. Then the only measure in $\prob(\bnda(G,H))$ mapped to $x$ is the Dirac mass $\delta_x$, see~\cite[1.4]{Phelps_LNM}. Therefore $\delta_x \in L$ for any $x\in\ext(\bnda(G,H))$. Since $L$ is closed, it contains $\delta_x$ for all $x\in\ol{\ext(\bnda(G,H))}$. Since $L$ is also convex, it contains $\prob\left(\ol{\ext(\bnda(G,H))}\right)$ viewed as a subspace of $\prob(\bnda(G,H))$. By irreducibility, $L$ is exactly $\prob\left(\ol{\ext(\bnda(G,H))}\right)$.

The fact that the barycentre map restricts to an isomorphism
\begin{equation*}
\prob\left(\ol{\ext(\bnda(G,H))}\right) \lra\bnda(G,H)
\end{equation*}
precisely means that the set $\bnd(G,H)$ defined as $\ext(\bnda(G,H))$ is closed and that $\bnda(G,H)$ identifies with $\prob(\bnd(G,H))$, see~\cite[II.4.1]{Alfsen_book}.
\end{proof}

Given that the Furstenberg boundary $\bnd(G, H)$ is canonically defined in terms of $\bnda(G, H)$, Corollary~\ref{cor:bnda:extends} implies:

\begin{cor}\label{cor:bnd:extends}
The automorphism group $\Aut(G,H)$ has an action by homeomorphisms on $\bnd(G, H)$ such that the resulting action of $\norma_G(H)$ induced by $G\to\Aut(G)$ coincides with the one given by the original $G$-action on $\bnd(G, H)$.\qed
\end{cor}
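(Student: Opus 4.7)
The plan is to deduce Corollary~\ref{cor:bnd:extends} directly from Corollary~\ref{cor:bnda:extends} and Theorem~\ref{thm:Bauer}, since $\bnd(G,H)$ was defined intrinsically in terms of $\bnda(G,H)$. The statement of~\cite[II.4.3]{Glasner_LNM} that is invoked for $\bnda(G,H)$ will already do most of the work; what remains is to transfer the action from the simplex to its set of extremal points.

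First, I would record that by Corollary~\ref{cor:bnda:extends}, each $\alpha\in\Aut(G,H)$ acts on $\bnda(G,H)$ as an affine homeomorphism (it is invertible with inverse given by $\alpha\inv$, using the same construction). Any affine homeomorphism of a convex compact set permutes its extremal points: if $x = \frac12(y+z)$ with $y\neq z$, then applying $\alpha\inv$ would also decompose $\alpha\inv(x)$ non-trivially. Hence $\alpha$ restricts to a bijection of $\ext(\bnda(G,H)) = \bnd(G,H)$. By Theorem~\ref{thm:Bauer}, $\bnda(G,H)$ is a Bauer simplex, so $\bnd(G,H)$ is closed in $\bnda(G,H)$ and inherits a compact Hausdorff topology. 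The restriction of a homeomorphism to a closed invariant subset is again a homeomorphism, so we get a homeomorphic action of $\Aut(G,H)$ on $\bnd(G,H)$.

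Next I would check the compatibility assertion. The conjugation homomorphism $G \to \Aut(G)$ restricts to $\norma_G(H) \to \Aut(G,H)$. By Corollary~\ref{cor:bnda:extends}, the induced action of $\norma_G(H)$ on $\bnda(G,H)$ through this map coincides with the restriction of the original $G$-action. Restricting both sides to the invariant closed subset $\bnd(G,H)$ yields the same statement for $\bnd(G,H)$, which is exactly what is claimed.

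There is no real obstacle here; the only small point of vigilance is to note that extremal points are preserved by affine homeomorphisms (not only by affine $G$-morphisms between possibly different flows), which is immediate from the definition, and to invoke Theorem~\ref{thm:Bauer} to ensure that $\bnd(G,H)$ is genuinely a closed subspace so that the restricted action is by honest homeomorphisms of a compact Hausdorff space rather than merely by Borel bijections of an abstract set of extreme points. Continuity of the joint action $\Aut(G,H) \times \bnd(G,H) \to \bnd(G,H)$ is not asserted in the statement (and would in fact require a topology on $\Aut(G,H)$); what is asserted is only that each individual $\alpha$ acts by a homeomorphism, which is what the above restriction argument delivers.
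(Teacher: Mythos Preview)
Your argument is correct and is exactly the approach the paper intends: the corollary is stated with a \qed{} immediately after the remark that $\bnd(G,H)$ is canonically defined in terms of $\bnda(G,H)$, so the content is precisely the restriction of the affine $\Aut(G,H)$-action from Corollary~\ref{cor:bnda:extends} to the (closed, by Theorem~\ref{thm:Bauer}) set of extremal points. Your added details about affine homeomorphisms preserving extremal points and about compatibility on $\norma_G(H)$ are the routine verifications the paper leaves implicit.
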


Likewise, Corollary~\ref{cor:no:endo} implies the corresponding statement for $(G,H)$\hyph{}boundaries thanks to the functor $X\mapsto \prob(X)$ :

\begin{cor}\label{cor:no:endo:top}
Let $X$ be a $(G,H)$\hyph{}boundary. Then the identity is the only continuous $G$-map $X\to X$.\qed
\end{cor}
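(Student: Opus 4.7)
The plan is to transfer Corollary~\ref{cor:no:endo} from the affine setting to the topological one via the functor $X \mapsto \prob(X)$. Given a continuous $G$-map $f\colon X \to X$, first I would form the pushforward $\prob(f)\colon \prob(X) \to \prob(X)$, $\mu \mapsto f_*\mu$. This map is affine (pushforward preserves convex combinations of measures), continuous in the weak-$*$ topology (this is a standard consequence of $f$ being continuous, since $\int \phi\, d(f_*\mu) = \int \phi\circ f\, d\mu$ for every $\phi \in \cont(X)$), and $G$-equivariant (because $f$ is).

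Next I would invoke the hypothesis: since $X$ is a $(G,H)$-boundary, by definition $\prob(X)$ is a $(G,H)$-irreducible affine flow. Thus $\prob(f)$ is a $G$-morphism from the $(G,H)$-irreducible affine flow $\prob(X)$ to itself, so Corollary~\ref{cor:no:endo} forces $\prob(f) = \Id_{\prob(X)}$.

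Finally, I would restrict this identity to Dirac masses. Recalling that $X$ embeds homeomorphically in $\prob(X)$ via $x \mapsto \delta_x$, and that $\prob(f)(\delta_x) = \delta_{f(x)}$, we obtain $\delta_{f(x)} = \delta_x$ for every $x \in X$, hence $f(x)=x$. There is no serious obstacle here; the only point to be careful about is verifying that $\prob(f)$ is genuinely a morphism in the category of affine $G$-flows, but this is immediate from the functoriality of $\prob(\cdot)$.
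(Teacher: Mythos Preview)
Your proof is correct and is exactly the approach the paper takes: it simply says that Corollary~\ref{cor:no:endo} implies the statement ``thanks to the functor $X\mapsto \prob(X)$'', and you have spelled out precisely what that means.
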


We now justify Remark~\ref{rem:image:contains}:

\begin{prop}\label{prop:image:contains}
For any $(G,H)$\hyph{}boundary $X$ there is a continuous $G$-map from $\bnd(G,H)$ to $P(X)$ whose image contains $X$ --- when $X$ is viewed as a subspace of $\prob(X)$.
\end{prop}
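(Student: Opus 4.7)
The plan is to apply the universality of $\bnda(G,H)$ directly to the affine $(G,H)$-flow $\prob(X)$, then restrict the resulting map to extremal points and show that its image contains all Dirac masses.

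First I would note that because $X$ is a $(G,H)$-boundary, $\prob(X)$ is a $(G,H)$-irreducible affine flow by definition. Then Remark~\ref{rem:univ} (i.e.\ Proposition~\ref{prop:univ} combined with Lemma~\ref{lem:exists:min}) furnishes a $G$-morphism $f\colon \bnda(G,H) \to \prob(X)$, which is moreover onto by Lemma~\ref{lem:onto} applied to the $(G,H)$-irreducible target. Since $\bnd(G,H) = \ext(\bnda(G,H))$ is closed in $\bnda(G,H)$ by Theorem~\ref{thm:Bauer}, the restriction $\bar f := f|_{\bnd(G,H)} \colon \bnd(G,H) \to \prob(X)$ is a continuous $G$-map, which will be the map of the statement.

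It remains to check that every Dirac mass $\delta_x$ with $x\in X$ lies in the image of $\bar f$. Here I would invoke the standard fact that the preimage under an affine map of an extremal point is a face: if $y_1,y_2\in \bnda(G,H)$ satisfy $\tfrac12(y_1+y_2)\in f\inv(\delta_x)$, then $\tfrac12(f(y_1)+f(y_2))=\delta_x$, and since $\delta_x\in\ext(\prob(X))$ this forces $f(y_1)=f(y_2)=\delta_x$. Thus $f\inv(\delta_x)$ is a closed face of $\bnda(G,H)$, and it is non-empty by the surjectivity of $f$. Applying the Krein--Milman theorem to this compact convex face produces an extremal point $y$ of $f\inv(\delta_x)$; since $f\inv(\delta_x)$ is a face, $y$ is also extremal in $\bnda(G,H)$, hence $y\in\bnd(G,H)$, and by construction $\bar f(y)=\delta_x$.

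There is no substantial obstacle here: the argument is essentially a direct combination of the universal property of $\bnda(G,H)$, the Bauer simplex description from Theorem~\ref{thm:Bauer}, and elementary facts about faces in convex compact sets. The only point requiring a moment of care is the verification that preimages of extremal points are faces, which is immediate from affinity.
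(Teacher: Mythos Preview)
Your proof is correct and follows essentially the same route as the paper's: obtain a surjective $G$-morphism $\bnda(G,H)\to\prob(X)$ from universality, then observe that the fibre over an extremal point is a face whose extremal points are extremal in $\bnda(G,H)$. The paper phrases this last step as a general observation about surjective affine maps between convex compact sets, but the content is identical to your face argument.
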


\begin{proof}
Since $X$ is a $(G,H)$\hyph{}boundary, there is a $G$\hyph{}morphism from $\bnda(G,H)$ onto $P(X)$. Now the proposition is a consequence of the following observation. Given any surjective continuous affine map $\pi\colon K\to L$ between convex compact sets, $\pi(\ext(K))$ contains $\ext(L)$. Indeed, let $x$ be an extremal point of $L$. Then the fiber $\pi\inv(\{x\})$ is a convex compact subset of $K$. One checks that any extremal point of $\pi\inv(\{x\})$ is extremal in $K$, whence the observation. 
\end{proof}

\section{Relative co-amenability}
Let $G$ be a topological group. As indicated in the Introduction, the amenability of $G$ and of the relative position of its subgroups $H, H'$ are all subsumed in one simple notion: $H$ is called co-amenable to $H'$ relative to $G$ if every affine $(G,H)$-flow has an $H'$-fixed point. It is natural that this can be rephrased using the universal affine spaces:

\begin{prop}\label{prop:relco}
Let $H, H'$ be subgroups of $G$. The following are equivalent:
\begin{enumerate}[(i)]
\item $H$ is co-amenable to $H'$ relative to $G$.\label{pt:relco:def}
\item There exists a $G$\hyph{}morphism $\bnda(G, H') \to \bnda(G, H)$. \label{pt:relco:map}
\item $H'$ fixes a point in $\bnda(G, H)$. \label{pt:relco:fp}
\end{enumerate}
\end{prop}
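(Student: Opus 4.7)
The plan is to close the loop $(\mathrm{i})\Rightarrow(\mathrm{iii})\Rightarrow(\mathrm{ii})\Rightarrow(\mathrm{i})$, using Remark~\ref{rem:univ} (which states that $\bnda(G,H)$ maps onto every affine $(G,H)$-flow) as the central tool. None of the steps looks like a genuine obstacle; the statement is essentially a translation of the definition of $\bnda$ into the language of relative co-amenability.

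For $(\mathrm{i})\Rightarrow(\mathrm{iii})$, observe that $\bnda(G,H)$ is itself an affine $(G,H)$-flow: it is $(G,H)$-irreducible, which in particular implies that it contains an $H$-fixed point. Applying the hypothesis (i) to $K=\bnda(G,H)$ produces an $H'$-fixed point in $\bnda(G,H)$, which is exactly (iii).

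For $(\mathrm{iii})\Rightarrow(\mathrm{ii})$, I note that the existence of an $H'$-fixed point in $\bnda(G,H)$ turns $\bnda(G,H)$ into an affine $(G,H')$-flow (the $G$-action is unchanged and the $H'$-fixed point is provided by hypothesis). By Remark~\ref{rem:univ} applied to the universal object $\bnda(G,H')$, there is a $G$-morphism $\bnda(G,H')\to \bnda(G,H)$, which is precisely (ii).

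For $(\mathrm{ii})\Rightarrow(\mathrm{i})$, let $K$ be an arbitrary affine $(G,H)$-flow. By Remark~\ref{rem:univ}, there is a $G$-morphism $\bnda(G,H)\to K$. Composing with the $G$-morphism $\bnda(G,H')\to \bnda(G,H)$ given by (ii) yields a $G$-morphism $\bnda(G,H')\to K$. Now $\bnda(G,H')$ is a $(G,H')$-irreducible affine flow and hence, by definition, contains an $H'$-fixed point; its image under this composed morphism is an $H'$-fixed point of $K$. Since $K$ was an arbitrary affine $(G,H)$-flow, this is the definition of $H$ being co-amenable to $H'$ relative to $G$, proving (i) and closing the cycle.
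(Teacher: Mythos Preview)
Your proof is correct and follows essentially the same cycle $(\mathrm{i})\Rightarrow(\mathrm{iii})\Rightarrow(\mathrm{ii})\Rightarrow(\mathrm{i})$ as the paper, invoking Remark~\ref{rem:univ} at the same two places. The only difference is cosmetic: the paper dispatches $(\mathrm{i})\Rightarrow(\mathrm{iii})$ with the phrase ``holds by definition'', whereas you spell out that $\bnda(G,H)$ is itself an affine $(G,H)$-flow to which the hypothesis applies.
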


\begin{proof}
\eqref{pt:relco:def}$\Rightarrow$\eqref{pt:relco:fp} holds by definition and \eqref{pt:relco:fp}$\Rightarrow$\eqref{pt:relco:map} follows from Remark~\ref{rem:univ} applied to $H'$.

Assume now~\eqref{pt:relco:map} and let $K$ be any affine $(G,H)$-flow. Composing the $G$\hyph{}morphism $\bnda(G, H') \to \bnda(G, H)$ with the $G$\hyph{}morphism $\bnda(G, H)\to K$ of Remark~\ref{rem:univ}, we see that $K$ has an $H'$-fixed point, whence~\eqref{pt:relco:def}.
\end{proof}

Combining Proposition~\ref{prop:relco} with the fact that $\bnda(G, H')$ and $\bnda(G, H)$ have no non-trivial endomorphisms (Corollary~\ref{cor:no:endo}), we deduce a characterisation that also holds with the Furstenberg boundary $\bnd(G, H)$ instead of $\bnda(G, H)$:

\begin{cor}\label{cor:same:bnd}
Let $H, H'$ be subgroups of $G$. The following are equivalent:
\begin{enumerate}[(i)]
\item $H,H'$ are co-amenable to each other relative to $G$.
\item $\bnda(G, H)$ and $\bnda(G, H')$ are isomorphic affine $G$-flows.
\item The Furstenberg boundaries $\bnd(G, H)$ and $\bnd(G, H')$ are isomorphic $G$-spaces.
\end{enumerate}\qed
%
\end{cor}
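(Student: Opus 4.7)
The plan is to circulate through the three conditions in the order (i)$\Rightarrow$(ii)$\Rightarrow$(iii)$\Rightarrow$(i), using Proposition~\ref{prop:relco} as the hinge between (i) and (ii), and Theorem~\ref{thm:Bauer} as the hinge between (ii) and (iii).

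For (i)$\Rightarrow$(ii), I would apply Proposition~\ref{prop:relco} twice. Since $H$ is co-amenable to $H'$ relative to $G$, clause~\eqref{pt:relco:map} of that proposition furnishes a $G$\hyph{}morphism $f\colon \bnda(G,H')\to \bnda(G,H)$. Since $H'$ is also co-amenable to $H$ relative to $G$, we symmetrically obtain a $G$\hyph{}morphism $f'\colon \bnda(G,H)\to \bnda(G,H')$. The compositions $f\circ f'$ and $f'\circ f$ are $G$\hyph{}endomorphisms of the $(G,H)$\hyph{}irreducible affine flows $\bnda(G,H)$ and $\bnda(G,H')$, respectively; by Corollary~\ref{cor:no:endo}, each must be the identity, so $f$ and $f'$ are mutually inverse affine $G$-isomorphisms.

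For (ii)$\Rightarrow$(iii), I would observe that an affine $G$-isomorphism between the two simplices $\bnda(G,H)$ and $\bnda(G,H')$ carries extreme points to extreme points homeomorphically; by the definition of $\bnd(G,H)$ and $\bnd(G,H')$ as extreme-point sets of these Bauer simplices (Theorem~\ref{thm:Bauer}), this restriction is a $G$-homeomorphism $\bnd(G,H)\cong \bnd(G,H')$. For (iii)$\Rightarrow$(ii), a $G$-homeomorphism $\bnd(G,H)\cong \bnd(G,H')$ induces by pushforward of measures an affine $G$-isomorphism $\prob(\bnd(G,H))\cong \prob(\bnd(G,H'))$, which by Theorem~\ref{thm:Bauer} is exactly an affine $G$-isomorphism $\bnda(G,H)\cong \bnda(G,H')$.

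Finally, for (ii)$\Rightarrow$(i), an affine $G$-isomorphism $\bnda(G,H)\cong \bnda(G,H')$ transports the $H'$-fixed point present in $\bnda(G,H')$ to an $H'$-fixed point in $\bnda(G,H)$, and symmetrically an $H$-fixed point in $\bnda(G,H')$. Clause~\eqref{pt:relco:fp} of Proposition~\ref{prop:relco} then yields both relative co-amenabilities. There is no genuinely hard step; the only subtlety to keep in mind is that we must invoke Corollary~\ref{cor:no:endo} to promote the two separately produced morphisms in (i)$\Rightarrow$(ii) into mutually inverse isomorphisms, as otherwise universality alone would not suffice.
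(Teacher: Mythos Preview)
Your argument is correct and is exactly the approach the paper intends: the sentence preceding the corollary already says to combine Proposition~\ref{prop:relco} with Corollary~\ref{cor:no:endo}, and the \qed indicates no further proof is given. You have simply spelled out the details of that combination, together with the straightforward passage between (ii) and (iii) via Theorem~\ref{thm:Bauer}.
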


For nested subgroups, we deduce:

\begin{cor}\label{cor:nested}
Let $H_1 < H_2 < G$. Then
\begin{equation*}
\bnd(G, H_1) \cong \bnd(G, H_2) \kern2mm\Longleftrightarrow \kern2mm \text{$H_1$ is co-amenable to $H_2$ relative to $G$.}
\end{equation*}
This holds in particular if $H_1$ is co-amenable in $H_2$.\qed
\end{cor}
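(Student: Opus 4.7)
The plan is to deduce this directly from Corollary~\ref{cor:same:bnd} by observing that one half of the mutual co-amenability condition is automatic when $H_1 < H_2$. Indeed, by that corollary the isomorphism $\bnd(G, H_1) \cong \bnd(G, H_2)$ is equivalent to the pair $(H_1, H_2)$ being co-amenable to each other relative to $G$. The strategy is simply to show that one of these two relations is free, so that the remaining condition is precisely ``$H_1$ is co-amenable to $H_2$ relative to $G$''.

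The free direction is the assertion that $H_2$ is co-amenable to $H_1$ relative to $G$. By Definition~\ref{defi:relco} this means that every affine $(G, H_2)$-flow $K$ has an $H_1$-fixed point. But by definition $K$ already carries an $H_2$-fixed point, and since $H_1 \leq H_2$ that point is automatically fixed by $H_1$. Combining this with Corollary~\ref{cor:same:bnd} yields the stated equivalence.

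For the final ``in particular'' assertion, suppose $H_1$ is co-amenable in $H_2$, meaning every affine $(H_2, H_1)$-flow has an $H_2$-fixed point. To check that $H_1$ is then co-amenable to $H_2$ relative to $G$, I would take an arbitrary affine $(G, H_1)$-flow $K$ and restrict the action to $H_2$: the resulting affine $H_2$-flow still carries the $H_1$-fixed point coming from the $(G, H_1)$-flow structure, so it is an affine $(H_2, H_1)$-flow. The hypothesis then supplies an $H_2$-fixed point in $K$, which is exactly what is needed.

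There is no real obstacle here: the argument is a three-line assembly of Corollary~\ref{cor:same:bnd} with the trivial observation that an $H_2$-fixed point is a fortiori an $H_1$-fixed point. The only thing to be careful about is distinguishing the two asymmetric roles of ``$H$ co-amenable to $H'$ relative to $G$'' in Definition~\ref{defi:relco}, and tracking which of the two is the one that needs a nontrivial hypothesis.
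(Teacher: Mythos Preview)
Your proposal is correct and is exactly the argument the paper intends: the corollary is stated with a bare \qed, deduced from Corollary~\ref{cor:same:bnd} together with the observation that when $H_1<H_2$ the implication ``$H_2$ is co-amenable to $H_1$ relative to $G$'' is automatic. Your verification of the ``in particular'' clause via restriction to $H_2$ is likewise the intended one.
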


The fact that the co-amenability of $H_1$ in $H_2$ is not necessary is illustrated by the examples in~\cite{Monod-Popa} or~\cite{Pestov}, where one can assume $H_1\lhd H_2\lhd G$. This, together with Corollary~\ref{cor:nested}, justifies Proposition~\ref{prop:MP}.

Notice that Corollary~\ref{cor:nested} contains the following particular cases, which were established in~\cite{Bearden-Kalantar_arx} for discrete groups:

\begin{cor}
Let $H$ be a subgroup of the topological group $G$. Then
\begin{align*}
\bnd(G, H) \cong \bnd(G) \kern2mm&\Longleftrightarrow \kern2mm \text{$H$ amenable relative to $G$,}\\
\text{$\bnd(G, H)$ is trivial} \kern2mm&\Longleftrightarrow \kern2mm \text{$H$ is co-amenable in $G$.}
\end{align*}
\qed
\end{cor}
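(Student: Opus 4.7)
The plan is to recognise this corollary as simply the two endpoint instances of Corollary~\ref{cor:nested}, applied to the chain $1 < H < G$. Both equivalences reduce to unwinding Definition~\ref{defi:relco} once we identify the relative Furstenberg boundaries of the trivial subgroup and of $G$ itself.

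For the first equivalence, I would take $H_1 = 1$ and $H_2 = H$ in Corollary~\ref{cor:nested}. The preliminary step is to observe that $\bnd(G,1) \cong \bnd(G)$: an affine $(G,1)$-flow is just an affine $G$-flow (the $1$-fixed-point condition being vacuous), and $(G,1)$-irreducibility is plain $G$-irreducibility, so $\bnda(G,1)$ is the classical universal irreducible affine $G$-flow $\bnda(G)$, whose extreme boundary is $\bnd(G)$. Corollary~\ref{cor:nested} then yields $\bnd(G,H)\cong\bnd(G)$ if and only if $1$ is co-amenable to $H$ relative to $G$, which by Definition~\ref{defi:relco} means that every affine $(G,1)$-flow, i.e.\ every affine $G$-flow, has an $H$-fixed point --- precisely the definition of $H$ being amenable relative to $G$.

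For the second equivalence, I would take $H_1 = H$ and $H_2 = G$. Here the preliminary observation is that $\bnd(G,G)$ is trivial: any affine $(G,G)$-flow contains a $G$-fixed point, which by itself forms a $(G,G)$-subflow, so $(G,G)$-irreducibility forces such a flow to be a singleton, hence $\bnda(G,G) = \{\ast\}$. Corollary~\ref{cor:nested} then gives that $\bnd(G,H)$ is trivial if and only if $H$ is co-amenable to $G$ relative to $G$, which unwinds to the condition that every affine $(G,H)$-flow has a $G$-fixed point --- the standing definition of co-amenability of $H$ in $G$ used throughout the paper (compare Proposition~\ref{prop:coamen}).

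I do not anticipate any real obstacle here, since both statements reduce mechanically to Corollary~\ref{cor:nested} together with the two trivial identifications $\bnd(G,1)=\bnd(G)$ and $\bnd(G,G)=\{\ast\}$. The only point worth flagging is to make these two identifications explicit rather than taking them for granted, so that the reader sees that the relative framework genuinely specialises to both the classical Furstenberg-boundary picture and to co-amenability at the two extremes of the subgroup lattice.
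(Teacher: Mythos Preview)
Your proposal is correct and is exactly the argument the paper has in mind: the corollary is stated immediately after Corollary~\ref{cor:nested} as its two endpoint instances (with $H_1=1,\ H_2=H$ and $H_1=H,\ H_2=G$), together with the trivial identifications $\bnd(G,1)=\bnd(G)$ and $\bnd(G,G)=\{\ast\}$. The only addition you make is to spell out these identifications explicitly, which the paper leaves implicit.
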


Using the characterisation~\eqref{pt:relco:fp} of Proposition~\ref{prop:relco}, the following reduces to a compactness argument together with the continuity of the action on $\bnda(G, H)$.

\begin{cor}
Let $H, H'$ be subgroups of $G$. There is a subgroup $H''$ of $G$ containing $H'$ which is maximal amongst all subgroups to which $H$ is co-amenable relative to $G$.

Moreover, $H''$ is closed in $G$.\qed
\end{cor}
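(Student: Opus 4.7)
The plan is to use the equivalence in Proposition~\ref{prop:relco}\eqref{pt:relco:fp}, which translates the condition ``$H$ is co-amenable to $L$ relative to $G$'' into the much more concrete condition ``$L$ fixes a point in $\bnda(G,H)$''. With this reformulation, the task becomes showing that among all subgroups $L\le G$ containing $H'$ with $\bnda(G,H)^L\neq\emptyset$ there is a maximal element, and that any such element is automatically closed. Note that $H'$ itself lies in this collection precisely when $H$ is co-amenable to $H'$ relative to $G$, which I take as the non-trivial case of the statement.

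The core argument is a direct application of Zorn's lemma to the collection $\mathcal{S}$ of subgroups of $G$ containing $H'$ with non-empty fixed-point set in $\bnda(G,H)$, ordered by inclusion. Given a chain $(L_\alpha)$ in $\mathcal{S}$, continuity of the $G$-action ensures that each fixed-point set $\bnda(G,H)^{L_\alpha}$ is closed in the compact space $\bnda(G,H)$; since the chain is totally ordered by inclusion, these sets form a decreasing family of non-empty closed subsets. Compactness (via the finite intersection property) delivers a common point in $\bigcap_\alpha \bnda(G,H)^{L_\alpha}$, and such a point is fixed by the union $L := \bigcup_\alpha L_\alpha$. Since the union of a chain of subgroups is again a subgroup, and it contains $H'$, it lies in $\mathcal{S}$ and provides the upper bound needed to extract a maximal $H''\in\mathcal{S}$.

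For the closedness claim, I would re-use the continuity of the action: the stabiliser in $G$ of any point of $\bnda(G,H)$ is a closed subgroup. Consequently, if $x$ is fixed by $H''$, then it is also fixed by the closure $\ol{H''}$, so $\ol{H''}$ again belongs to $\mathcal{S}$. Maximality of $H''$ then forces $H''=\ol{H''}$.

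The main obstacle --- modest as it is --- is making sure the Zorn step is genuinely valid: one needs the starting collection to be non-empty (whence the implicit assumption that $H$ be co-amenable to $H'$), the fixed-point sets to be closed (from continuity), and compactness of $\bnda(G,H)$ to guarantee that the nested intersection survives passage to the union of the chain. Once the correspondence of Proposition~\ref{prop:relco} is in place, the remainder is bookkeeping.
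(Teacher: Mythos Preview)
Your proposal is correct and follows exactly the approach the paper has in mind: it explicitly says the corollary ``reduces to a compactness argument together with the continuity of the action on $\bnda(G,H)$'' via characterisation~\eqref{pt:relco:fp} of Proposition~\ref{prop:relco}, and your Zorn argument on the poset of subgroups with non-empty fixed-point set in $\bnda(G,H)$, plus the stabiliser argument for closedness, is precisely that. Your remark that the collection is non-empty only when $H$ is already co-amenable to $H'$ relative to $G$ is a fair observation; the paper leaves this implicit (and in the main application, Corollary~\ref{cor:hull}, one takes $H=H'$ so it is automatic).
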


In the case $H=H'$, we can further apply Corollary~\ref{cor:same:bnd} and deduce:

\begin{cor}\label{cor:hull}
Any subgroup $H<G$ is contained in a closed subgroup $\wh{H}<G$ which is maximal amongst all subgroups to which $H$ is co-amenable relative to $G$. Moreover, we have $\bnd(G, H) =\bnd(G, \wh{H})$.\qed
\end{cor}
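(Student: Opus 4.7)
The plan is to invoke the immediately preceding corollary with the choice $H' = H$, and then upgrade the resulting one-sided co-amenability to mutual co-amenability, so that Corollary~\ref{cor:same:bnd} applies.

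First, applying the preceding corollary to the pair of subgroups $H, H$ produces a subgroup $\wh{H} < G$ which contains $H$ and which is maximal amongst all subgroups of $G$ to which $H$ is co-amenable relative to $G$. That same corollary also provides for free that $\wh{H}$ is closed in $G$. This handles the first sentence of the statement and leaves only the identification of boundaries.

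To deduce $\bnd(G, H) \cong \bnd(G, \wh{H})$ via Corollary~\ref{cor:same:bnd}, one needs $H$ and $\wh{H}$ to be co-amenable to each other relative to $G$. One direction, namely that $H$ is co-amenable to $\wh{H}$ relative to $G$, holds by the very construction of $\wh{H}$. For the reverse direction, the inclusion $H \subseteq \wh{H}$ does all the work: any affine $(G,\wh{H})$-flow carries, by definition, an $\wh{H}$-fixed point, and such a point is automatically $H$-fixed since $H \subseteq \wh{H}$. Thus every affine $(G,\wh{H})$-flow has an $H$-fixed point, meaning precisely that $\wh{H}$ is co-amenable to $H$ relative to $G$. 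Corollary~\ref{cor:same:bnd} then yields the claimed isomorphism $\bnd(G, H) \cong \bnd(G, \wh{H})$.

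There is no real obstacle here: the substance of the statement sits entirely in the preceding corollary, and the upgrade from one-sided to two-sided co-amenability is automatic from the subgroup inclusion. The only thing to be careful about is to invoke the previous corollary with $H'=H$ (rather than with $H' = 1$ or similar) so that the maximal subgroup produced in fact contains $H$.
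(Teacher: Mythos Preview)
Your proof is correct and follows exactly the approach indicated in the paper: apply the preceding corollary with $H'=H$ to obtain the closed maximal $\wh{H}\supseteq H$, then invoke Corollary~\ref{cor:same:bnd} after noting that the inclusion $H\subseteq\wh{H}$ upgrades the one-sided co-amenability to mutual co-amenability. The paper states this in a single line; your version simply makes the two-sidedness explicit.
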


We shall refer to $\wh{H}$ as a \textbf{hull} of $H$ in $G$. For instance, when $H$ is trivial, $\wh{H}$ is just a maximal relatively amenable subgroup of $G$. In the discrete case, or for semi-simple groups, this coincides with maximal amenable subgroups.

\section{Co-compact subgroups}
The key reason why the Furstenberg boundary $\bnd(G)$ of a semi-simple Lie group $G$ is just a homogeneous space is that $G$ contains a co\hyph{}compact amenable subgroup. In the relative case, we can benefit from the co\hyph{}compactness of certain subgroups even when they are not amenable. Specifically, let $H<G$ be a subgroup and consider a hull $H< \wh{H} < G$ in the sense of Corollary~\ref{cor:hull}.

\begin{thm}\label{thm:coc:hull}
If $G/\wh{H}$ is compact (for instance if $H$ itself is co\hyph{}compact), then
\begin{equation*}
\bnd(G, H) =\bnd(G, \wh{H}) \cong G/\wh{H}.
\end{equation*}
Moreover in that case $\wh{H} < G$ is unique up to conjugacy amongst co\hyph{}compact hulls of $H$.
\end{thm}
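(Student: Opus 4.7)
The plan is to construct a continuous $G$-equivariant bijection $G/\wh{H}\to\bnd(G,\wh{H})$ via the orbit of an $\wh{H}$-fixed point in $\bnda(G,\wh{H})$; the compactness of $G/\wh{H}$ upgrades such a bijection to a homeomorphism, and comparing stabilisers along isomorphisms of coset spaces handles the conjugacy statement.

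By Corollary~\ref{cor:hull} we have $\bnd(G,H)=\bnd(G,\wh{H})$, so it suffices to identify $\bnd(G,\wh{H})$ with $G/\wh{H}$. Proposition~\ref{prop:relco} supplies an $\wh{H}$-fixed point $p\in\bnda(G,\wh{H})$. The orbit map $g\mapsto gp$ descends to a continuous $G$-equivariant map $\pi\colon G/\wh{H}\to\bnda(G,\wh{H})$, and by compactness of $G/\wh{H}$ its image $Gp$ is closed. Lemma~\ref{lem:KM}, applied to $p$ in the $(G,\wh{H})$-irreducible $\bnda(G,\wh{H})$, yields
\[
\bnd(G,\wh{H})=\ext(\bnda(G,\wh{H}))\subseteq\ol{Gp}=Gp.
\]
Since $\bnd(G,\wh{H})$ is non-empty and $G$-invariant, any point of it lying in $Gp$ forces $p\in\bnd(G,\wh{H})$ upon pulling back by a group element, so in fact $Gp=\bnd(G,\wh{H})$.

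It remains to show $\Stab_G(p)=\wh{H}$. The inclusion $\wh{H}\subseteq\Stab_G(p)$ is obvious. Conversely, because $\Stab_G(p)$ fixes a point of $\bnda(G,H)$, Proposition~\ref{prop:relco} says that $H$ is co-amenable to $\Stab_G(p)$ relative to $G$, and the maximality built into Corollary~\ref{cor:hull} then forces $\Stab_G(p)=\wh{H}$. A continuous $G$-equivariant bijection from compact $G/\wh{H}$ onto Hausdorff $\bnd(G,\wh{H})$ is automatically a homeomorphism. For the uniqueness part, if $\wh{H}$ and $\wh{H}'$ are two co-compact hulls of $H$, then both $G/\wh{H}$ and $G/\wh{H}'$ are identified with $\bnd(G,H)$ as $G$-flows; following the coset $e\wh{H}$ along the resulting isomorphism and comparing stabilisers yields $\wh{H}=g_0\wh{H}'g_0^{-1}$ for some $g_0\in G$.

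The most delicate step I expect is the passage from $\bnd(G,\wh{H})\subseteq Gp$ to $p\in\bnd(G,\wh{H})$: it rests on the interplay between the closedness of $Gp$ (which requires co-compactness of $\wh{H}$), the inclusion $\bnd(G,\wh{H})\subseteq\ol{Gp}$ (which requires irreducibility of $\bnda(G,\wh{H})$ via Lemma~\ref{lem:KM}), and the automatic $G$-invariance of the extremal set.
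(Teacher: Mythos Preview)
Your proof is correct and follows essentially the same route as the paper's: pick an $\wh{H}$-fixed point in $\bnda(G,\wh{H})$, use co-compactness to make the orbit closed, invoke Lemma~\ref{lem:KM} to trap $\bnd(G,\wh{H})$ inside the orbit, and use the maximality of the hull together with Proposition~\ref{prop:relco} to pin down the stabiliser. The paper is terser at the step $\bnd(G,\wh{H})\subseteq Gp \Rightarrow Gp=\bnd(G,\wh{H})$ and at the conjugacy statement, while you spell out the $G$-invariance of the extremal set and the standard compact/Hausdorff bijection argument; these are legitimate elaborations rather than a different approach.
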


The classical case is again contained here by setting $H=1$. However, in general, we caution that it is important to take $\wh{H}$ containing $H$. Indeed, the group $G$ of Example~\ref{exam:affine} contains a co\hyph{}compact amenable subgroup $H'$, but $\bnd(G, H)$ is far from homogeneous even though $H$ is trivially co-amenable to $H'$ relative to $G$.

\begin{proof}[Proof of Theorem~\ref{thm:coc:hull}]
We know already $\bnd(G, H) =\bnd(G, \wh{H})$ from Corollary~\ref{cor:hull}. By definition and Proposition~\ref{prop:relco}, $\wh{H}$ fixes a point $x$ in $\bnda(G/\wh{H})$. The orbit $Gx$ is a quotient of $G/\wh{H}$ and hence is closed in $\bnda(G/\wh{H})$ because $\wh{H}$ is co\hyph{}compact. On the other hand, $x$ is $H$-fixed and therefore Lemma~\ref{lem:KM}, implies that $Gx$ contains the extremal points $\bnd(G, \wh{H})$ of $\bnda(G/\wh{H})$. This implies $G x = \bnd(G, \wh{H})$. On the other hand, the stabiliser of $x$ cannot be larger than $\wh{H}$ by maximality and Proposition~\ref{prop:relco} again. The uniqueness of $\wh{H}$ up to conjugacy now follows from the uniqueness of $\bnd(G, H)$.
\end{proof}

Suppose that a topological group $G$ contains a co\hyph{}compact amenable subgroup $P$. This is for instance the case of all connected locally compact groups~\cite[3.3]{Anantharaman02} and algebraic groups of local fields. More generally we allow $P$ to be amenable relative to $G$, although this makes no difference in the two classes of examples just mentioned, see~\cite{Caprace-Monod_rel}.

Then $P$ is contained in a \emph{maximal} relatively amenable subgroup, which is just $\wh{P}$; the latter is still co\hyph{}compact of course. In the case of semi-simple Lie groups or algebraic groups over local fields, $\wh{P}$ is a minimal parabolic subgroup. This is the motivation for the following statement.

\begin{thm}\label{thm:parabolic:abstract}
Let $G$ be a topological group admitting a co\hyph{}compact relatively amenable subgroup $P$.

For any closed subgroup $H$ containing $\wh{P}$, we have $\bnd(G, H) = G/H$.
\end{thm}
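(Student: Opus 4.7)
The plan is to apply Theorem~\ref{thm:coc:hull} to the pair $(G,H)$ and then to identify the hull $\wh{H}$ with $H$ itself. Since $P$ is co\hyph{}compact and $\wh{P}\se H\se\wh{H}$, the quotient $G/\wh{H}$ is compact as a continuous image of $G/\wh{P}$, so Theorem~\ref{thm:coc:hull} gives $\bnd(G,H)=\bnd(G,\wh{H})\cong G/\wh{H}$, and everything reduces to showing $\wh{H}=H$.

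The crucial step is to upgrade the $(G,\wh{P})$\hyph{}irreducibility of $\bnda(G,\wh{P})=\prob(G/\wh{P})$ to plain $G$\hyph{}irreducibility. This rests on the observation that $\wh{P}$ is itself amenable relative to $G$: the relative amenability of $P$ ensures that every affine $G$\hyph{}flow is in fact an affine $(G,P)$\hyph{}flow, and the defining property of the hull then produces a $\wh{P}$\hyph{}fixed point in each of them. Consequently every closed convex $G$\hyph{}invariant subset of $\prob(G/\wh{P})$ is already an affine $(G,\wh{P})$\hyph{}subflow, so $(G,\wh{P})$\hyph{}irreducibility forces $G$\hyph{}irreducibility. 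Since $\prob(G/H)$ is the image of $\prob(G/\wh{P})$ under the $G$\hyph{}equivariant surjection induced by $G/\wh{P}\to G/H$, it is itself $G$\hyph{}irreducible, hence in particular $(G,H)$\hyph{}irreducible; and it carries the $H$\hyph{}fixed point $\delta_H$, so it is a genuine $(G,H)$\hyph{}irreducible affine flow.

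Now I would compare $\prob(G/H)$ with $\bnda(G,H)=\prob(G/\wh{H})$. Universality of the latter supplies a $G$\hyph{}morphism $f\colon\prob(G/\wh{H})\to\prob(G/H)$, while the quotient map $\pi\colon G/H\to G/\wh{H}$ induces a $G$\hyph{}morphism $\pi_*\colon\prob(G/H)\to\prob(G/\wh{H})$ in the opposite direction. Both spaces being $(G,H)$\hyph{}irreducible, Corollary~\ref{cor:no:endo} forces the two compositions to be the identity, so $f$ and $\pi_*$ are inverse isomorphisms of affine $G$\hyph{}flows. Restricting to the sets of extreme points, the quotient map $\pi\colon G/H\to G/\wh{H}$ must itself be a bijection, which happens only when $H=\wh{H}$, whence $\bnd(G,H)\cong G/H$.

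The main obstacle I anticipate is precisely the promotion of $(G,\wh{P})$\hyph{}irreducibility to full $G$\hyph{}irreducibility: once one realises that relative amenability of $\wh{P}$ makes every closed convex $G$\hyph{}invariant subset automatically carry a $\wh{P}$\hyph{}fixed point, the two notions of irreducibility collapse in $\bnda(G,\wh{P})$, and the remainder of the proof becomes bookkeeping built around Theorem~\ref{thm:coc:hull} and Corollary~\ref{cor:no:endo}.
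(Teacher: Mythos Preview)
Your proof is correct and follows essentially the same line as the paper's: both first invoke Theorem~\ref{thm:coc:hull} to get $\bnd(G,H)=G/\wh{H}$, then establish that $G/H$ is itself a $(G,H)$\hyph{}boundary via the identification $\bnd(G)=G/\wh{P}$, and finally compare $G/H$ with $G/\wh{H}$ to force $\wh{H}=H$.

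The only noteworthy difference lies in the endgame. The paper uses Proposition~\ref{prop:image:contains} together with the homogeneity of $G/\wh{H}$ to produce a $G$\hyph{}map $G/\wh{H}\to G/H$ and then reads off $\wh{H}\se H$ from stabilisers. You instead stay at the affine level: having both $\prob(G/H)$ and $\prob(G/\wh{H})$ $(G,H)$\hyph{}irreducible, you apply Corollary~\ref{cor:no:endo} to the two available $G$\hyph{}morphisms to see that $\pi_*$ is an isomorphism, whence $\pi$ is bijective. Your variant is slightly cleaner in that it sidesteps the homogeneity argument and the (mildly implicit) stabiliser computation, while the paper's route has the merit of making visible the concrete $G$\hyph{}map $G/\wh{H}\to G/H$. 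Substantively, however, both hinge on the same rigidity principle encoded in Corollary~\ref{cor:no:endo}.
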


In particular this implies Theorem~\ref{thm:parabolic}.

\begin{proof}[Proof of Theorem~\ref{thm:parabolic:abstract}]
In view of Theorem~\ref{thm:coc:hull}, we know already $\bnd(G, H) =\bnd(G, \wh{H}) = G/\wh{H}$. It remains to prove $\wh{H}=H$. It is known since Furstenberg that $G/\wh{P}$ is the Furstenberg boundary of $G$; we can see this as a special case of Theorem~\ref{thm:coc:hull}. In particular, $G/H$ is a $G$\hyph{}boundary and hence a $(G,H)$\hyph{}boundary. By Proposition~\ref{prop:image:contains}, we have a continuous $G$-map $G/\wh{H}\to \prob(G/H)$ whose image contains $G/H$. Since $G/\wh{H}$ is homogeneous, this means that we have in fact a $G$-map $G/\wh{H}\to G/H$. It follows that $H$ contains $\wh{H}$ and hence $\wh{H}=H$.
\end{proof}

Here is an explicit illustration of the interplay between Theorem~\ref{thm:coc:hull} and Theorem~\ref{thm:parabolic:abstract}:

\begin{exam}
Let $G=\SL_3(\RR)$ and let $H$ be the (non-cocompact!) subgroup of matrices of the form form $\left(\begin{smallmatrix} * & * & *\\ * & * & *\\ 0 & 0 & 1\end{smallmatrix}\right)$. Thus $H$ is isomorphic to the special affine group of $\RR^2$ studied more closely in Section~\ref{sec:LC}. Let moreover $Q$ be the parabolic subgroup $\left(\begin{smallmatrix} * & * & *\\ * & * & *\\ 0 & 0 & *\end{smallmatrix}\right)$. Then we have
\begin{equation*}
\wh{H}= Q, \kern2mm \bnd(G, H) = \bnd(G, Q)  = G/Q = \mathrm{Gr}_2(\RR^3)
\end{equation*}
wherein the first equality is defined up to conjugation. Indeed, if we justify $\wh{H}= Q$, the remaining identifications hold by Theorems~\ref{thm:coc:hull} and~\ref{thm:parabolic:abstract}.

Notice first that $H$ is co-amenable in $Q$. Therefore $H$ has a hull $\wh{H}$ containing $Q$. On the other hand, $\wh{Q}=Q$, as established in Theorem~\ref{thm:parabolic:abstract}. But $H<Q<\wh{H}$ implies that $Q$ is co-amenable in $\wh{H}$ and hence $\wh{Q}$ contains $\wh{H}$; the claim follows.
\end{exam}

We note that in this example $Q$ accidentally happens to be a maximal subgroup of $G$, but the reasoning used above applies beyond this special case.

\section{Malnormal subgroups}
Recall that a subgroup $H<G$ of a group $G$ is called \textbf{almost malnormal} if the intersection $H\cap g H g\inv$ is finite for all $g\notin H$. Theorem~\ref{thm:malnormal} from the Introduction holds in this wider setting, indeed in an even more general one:

\begin{thm}\label{thm:a:malnormal}
Let $H$ be a subgroup of a discrete group $G$. Suppose that $H$ is non-amenable but that $H\cap g H g\inv$ is amenable for all $g\notin H$.

Then $\bnd(G,H) \cong \beta(G/H)$.
\end{thm}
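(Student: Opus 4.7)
The plan is to show that $\beta(G/H)$ is a $(G,H)$-boundary and that $\prob(\beta(G/H))$ is universal among $(G,H)$-irreducible affine flows; combining this with Proposition~\ref{prop:univ} and Theorem~\ref{thm:Bauer} will yield $\bnd(G,H) \cong \beta(G/H)$.

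First I verify that $\beta(G/H)$ is a $(G,H)$-boundary via Proposition~\ref{prop:traduc}. Since $eH\in G/H$ is fixed by $H$, the Dirac $\delta_{eH}$ is an $H$-fixed element of $\prob(\beta(G/H))$; density of $G/H$ in $\beta(G/H)$ implies $\ol{G\delta_{eH}}$ contains every $\delta_x$, so $\delta_{eH}$ is $G$-contracted to every point. The main task is to prove uniqueness---that $\delta_{eH}$ is the only $H$-fixed probability measure. Suppose $\mu\in\prob(\beta(G/H))^H$ has $a:=\mu(\{eH\})<1$. Then $m:=(1-a)\inv(\mu-a\delta_{eH})$ is an $H$-invariant mean on $\ell^\infty(G/H)$ vanishing on $1_{\{eH\}}$, equivalently an $H$-invariant mean on $\ell^\infty(G/H\setminus\{eH\})$. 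Decomposing into $H$-orbits,
\[
G/H \setminus \{eH\} \,=\, \bigsqcup_{i \in I} H/K_i,
\]
with stabilisers $K_i=H\cap g_iHg_i\inv$ amenable by hypothesis. Using amenability I choose a left-invariant mean $\nu_i$ on each $K_i$, and for $f\in\ell^\infty(H)$ define $\tilde f\in\ell^\infty(\bigsqcup_i H/K_i)$ by
\[
\tilde f(hK_i) \,:=\, \int_{K_i} f(hk)\, d\nu_i(k).
\]
Left-invariance of $\nu_i$ makes $\tilde f$ well-defined on cosets; the map $f\mapsto\tilde f$ is positive, unital, and $H$-equivariant for the left-translation actions. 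Therefore $\bar m(f):=m(\tilde f)$ is an $H$-invariant mean on $\ell^\infty(H)$, contradicting the non-amenability of $H$. Hence $\mu=\delta_{eH}$, and Proposition~\ref{prop:traduc} gives the first claim.

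For universality, let $K$ be any $(G,H)$-irreducible affine flow and fix $x\in K^H$. The assignment $gH\mapsto gx$ defines a $G$-equivariant map $\phi\colon G/H\to K$, continuous because $G/H$ is discrete. By the Stone--\v{C}ech universal property, $\phi$ extends uniquely to a continuous map $\bar\phi\colon\beta(G/H)\to K$; uniqueness combined with equivariance on the dense subset forces $\bar\phi$ to be $G$-equivariant. Composing the induced affine map $\bar\phi_*\colon\prob(\beta(G/H))\to\prob(K)$ with the barycenter $\prob(K)\to K$ yields a $G$-morphism $\prob(\beta(G/H))\to K$. Combined with the first step, this realises $\prob(\beta(G/H))$ as a $(G,H)$-irreducible affine flow admitting a $G$-morphism to every such flow, so by Proposition~\ref{prop:univ} it must be $\bnda(G,H)$; extracting extreme points via Theorem~\ref{thm:Bauer} concludes $\bnd(G,H)\cong\beta(G/H)$.

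The crux of the argument is the uniqueness of the $H$-fixed measure. A naive attempt would hope that an $H$-invariant mean on $\bigsqcup_i H/K_i$ forces some $K_i$ to be co-amenable in $H$, producing amenability of $H$; but each $m(1_{H/K_i})$ could vanish if the mass is distributed diffusely across infinitely many orbits. The averaging construction using the $\nu_i$ circumvents this by producing an $H$-invariant mean on $H$ itself regardless of the distribution, and this is precisely where the hypothesis that every $H\cap gHg\inv$ (with $g\notin H$) is amenable enters.
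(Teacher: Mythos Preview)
Your proof is correct and follows essentially the same approach as the paper's. The only difference is that where the paper outsources to a cited lemma the fact that an $H$-invariant mean on an $H$-set with amenable point-stabilisers forces $H$ to be amenable, you spell out that argument explicitly via the averaging construction with the means $\nu_i$.
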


\begin{proof}
We first claim that the Stone--\v{C}ech compactification $\beta(G/H)$ of the $G$-set $G/H$ is a $(G,H)$\hyph{}boundary, using the characterisation of Proposition~\ref{prop:traduc}. There is an $H$-invariant measure, namely the Dirac mass at the trivial coset in $G/H$. We write $*$ for this coset. That measure can be $G$-contracted to any point of $\beta(G/H)$ since it is defined by a point whose orbit is dense. It therefore suffices to show that there is no other $H$-invariant measure on $\beta(G/H)$. To this end, we recall that probability measures on $\beta(G/H)$ correspond to means (i.e.\ finitely additive measures) on $G/H$. Indeed, by the definition of the Stone--\v{C}ech compactification, there is a natural identification between the algebras $\cont(\beta(G/H))$ and $\ell^\infty(G/H)$. Therefore we want to show that the Dirac mass at the trivial coset $*$ is the only $H$-invariant mean on $G/H$. Equivalently, that there is no $H$-invariant mean on the $H$-set $S=G/H\setminus\{*\}$. Our assumption is equivalent to the amenability of the stabiliser in $H$ of any point in $S$. Therefore, the existence of an $H$-invariant mean on $S$ would contradict the non-amenability of $H$ (see e.g.\ Lemma~4.5 in~\cite{Glasner-Monod}). The claim is proven.

It remains to verify that $\prob(\beta(G/H))$ is also universal. Let thus $K$ be any $(G,H)$\hyph{}irreducible affine flow. Pick an $H$-fixed point in $K$; the associated orbital map yields a $G$-map $G/H\to K$. By the universal property of the Stone--\v{C}ech compactification, this extends to a continuous $G$-map $\beta(G/H)\to K$. The latter induces a $G$\hyph{}morphism from $\prob(\beta(G/H))$ to $\prob(K)$ which gives the desired $G$\hyph{}morphism to $K$ when composed with the barycentre map on $K$.
\end{proof}

\begin{proof}[Proof of Corollary~\ref{cor:wreath}]
Let $G=J\wr H$ with $H$ non-amenable. In order to be able to apply Theorem~\ref{thm:a:malnormal}, it suffices to check that $H$ is almost malnormal in $G$, which is equivalent to checking that the stabiliser in $H$ of a \emph{non-trivial} element of $\oplus_H J$ is finite. This stabiliser must in particular preserve the support of this element, which is a non-empty subset of $H$. The definition of the restricted product $\oplus_H J$ shows that this subset is finite and hence its stabiliser in $H$ is finite too.
\end{proof}

\begin{proof}[Proof of Corollary~\ref{cor:relhyp}]
It was proved by Osin that every $H_i$ is almost malnormal, see Theorem~1.4(2) in~\cite{Osin06_AMS}. Therefore, we can again apply Theorem~\ref{thm:a:malnormal} above.
\end{proof}

\section{The facts in the case of the affine group}
\label{sec:LC}
This section is devoted to the study of the special affine group of $\RR^2$. All the arguments hold mutatis mutandis also for $\RR^n$ with $n\geq 3$, and in fact even simplify slightly in what regards the uniqueness of a certain finitely additive measure below, thanks to Kazhdan's property~(T).


Let thus
\begin{equation*}
G= \RR^2 \rtimes H \kern2mm \text{with} \kern2mm H=\SL_2(\RR)
\end{equation*}
and let $D$ be a topological disc. Let $G$ act on the boundary circle $\SSS^1$ of $D$ by the action on the space of directions. Thus this is a double cover of the projective $G$-action on $\PP^1$. As for the interior $\mathring D$ of the disc, we identify it with $\RR^2$ endowed with the affine $G$-action. Notice that the $G$-actions on $\mathring D$ and on $\SSS^1$ do indeed combine to turn $D$ into a $G$-flow.

\begin{prop}\label{prop:D:bnd}
$D$ is a $(G,H)$\hyph{}boundary.
\end{prop}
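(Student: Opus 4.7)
The plan is to verify the criterion of Proposition~\ref{prop:traduc}: exhibit an $H$-fixed probability measure on $D$, then show that every such fixed measure is $G$-contracted to every point of $D$. The natural candidate is the Dirac mass $\delta_0$ at the origin of $\mathring D = \RR^2$, which is obviously fixed by the linear action of $H = \SL_2(\RR)$.

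First I would classify the $H$-fixed probability measures on $D$. Given such a measure $\mu$, decompose it as $\mu = \mu_{\mathring D} + \mu_{\SSS^1}$ using that both pieces of $D$ are $G$-invariant. The piece $\mu_{\SSS^1}$ pushes forward under the double cover $\SSS^1 \to \PP^1$ to an $\SL_2(\RR)$-invariant finite measure on $\PP^1$; but $\SL_2(\RR)$ acts transitively on $\PP^1$ with stabilizer the upper-triangular (non-unimodular) subgroup, so no invariant probability measure exists and $\mu_{\SSS^1}=0$. The piece $\mu_{\mathring D}$ is $\SL_2(\RR)$-invariant on $\RR^2$; since $\SL_2(\RR)$ acts transitively on $\RR^2\setminus\{0\}$, any invariant Radon measure there is a scalar multiple of Lebesgue measure, hence of infinite total mass. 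Therefore $\mu_{\mathring D}$ is supported on $\{0\}$, and $\mu = \delta_0$.

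Next I would contract $\delta_0$ to an arbitrary point $p \in D$. If $p \in \mathring D$, then translation by $p$ (an element of the normal subgroup $\RR^2 \lhd G$) sends $\delta_0$ to $\delta_p$. If $p = \xi \in \SSS^1$, pick any vector $v \in \RR^2$ pointing in direction $\xi$ and take $g_n = (nv,\Id)\in G$; then $g_n \delta_0 = \delta_{nv}$, and in the visual compactification $nv \to \xi$, so $g_n \delta_0 \to \delta_\xi$ in $\prob(D)$. Combined with the uniqueness of the $H$-fixed measure, this verifies the hypothesis of Proposition~\ref{prop:traduc}.

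The only real obstacle is the classification of $H$-fixed measures; the contraction step is essentially free from the semidirect structure. That classification in turn rests on the two standard non-existence results for $\SL_2(\RR)$-invariant finite measures on $\PP^1$ and on $\RR^2\setminus\{0\}$, both consequences of transitivity together with the non-unimodularity, respectively infinite volume, of the relevant stabilizer.
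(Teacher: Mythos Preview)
Your argument is correct and follows the same skeleton as the paper's proof: $H$ fixes $\delta_0$, this Dirac mass is $G$-contracted to any point because its $G$-orbit is dense in $D$, and the only $H$-invariant probability measure on $D$ is $\delta_0$, established by treating $\SSS^1$ and $\mathring D$ separately. The one noteworthy difference is in the $\RR^2$ step: the paper invokes the stronger fact that $\delta_0$ is the unique $H$-invariant \emph{finitely additive} probability on the Borel sets of $\RR^2$ (citing de~la~Harpe--Valette), because that stronger statement is reused later in the proof of Theorem~\ref{thm:af}; your Radon-measure argument via transitivity on $\RR^2\setminus\{0\}$ and uniqueness of the invariant measure on a homogeneous space is more elementary and entirely sufficient for the proposition at hand.
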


\begin{proof}
The group $H$ has an invariant probability measure on $D$ since it fixes a point in $\RR^2\cong \mathring D$, the origin. Moreover this measure can be $G$-contracted to any point of $D$ since the orbit of that point is dense. It suffices therefore to show that $H$ does not admit any other invariant probability measure $\mu$ on $D$.

First, $\mu$ gives mass zero to $\SSS^1$. Indeed, even the quotient $\PP^1$ has no $H$-invariant measure --- in fact this quotient is well known to be the Furstenberg boundary of $H$. Next, on $\mathring D \cong \RR^2$, we have an even stronger fact, which we will need again later: the only $H$-invariant \emph{finitely additive} probability measure on the algebra of Borel subsets of $\RR^2$ is the Dirac mass at the origin, see e.g.~\cite[ch.~2]{Harpe-Valette}.
\end{proof}

Notice that $D$ has two $G$-orbits; therefore it is not a minimal $G$-flow. Proposition~\ref{prop:image:contains} now implies that $\bnd(G,H)$ is not minimal either. Thus we have already established points~\eqref{af:pt:bnd} and~\eqref{af:pt:no-min} of Example~\ref{exam:affine}.

We now proceed to a somewhat non-explicit characterisation of the Furstenberg boundary $\bnd(G,H)$. Denote by $\ru(G)$ the space of right uniformly continuous bounded function on $G$. Recall that the right uniform continuity is equivalent to the continuity in sup-norm of the \emph{left} regular representation. We further denote by $\ru(G/H)$ the subspace of those functions that are right $H$-invariant. Although these functions will be seen as continuous functions on $\RR^2$, we caution that the uniform continuity requirement here is much stronger than the uniform continuity with respect to $\RR^2$. In fact, using the semi-direct product structure of $G$, we have:

\begin{lem}\label{lem:ru}
A bounded uniformly continuous function $f$ on $\RR^2$ is in $\ru(G/H)$ if and only if
\begin{equation*}
\lim_{n\to\infty}\sup_{v\in\RR^2} \left| f(h_n v) - f(v)\right| =0
\end{equation*}
for every sequence $h_n\to 1$ in $H$.\qed
\end{lem}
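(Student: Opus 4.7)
The plan is to unwind definitions by translating the right uniform continuity condition on $G$ into an analytic condition on the representative function on $\RR^2$, using the semi-direct product structure in an essential way.

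First I would fix notation: every right $H$-invariant function $\phi$ on $G = \RR^2 \rtimes H$ is of the form $\phi(v,h) = f(v)$ for a unique $f \colon \RR^2 \to \RR$, and $\phi$ is bounded iff $f$ is. Recalling that right uniform continuity of $\phi$ amounts to $\sup_x |\phi(gx) - \phi(x)| \to 0$ as $g \to 1$ in $G$ (which is the stated equivalence with left-regular continuity in sup-norm), I would compute $\phi(gx)$ explicitly: for $g = (v_0, h_0)$ and $x = (v, h)$, the semi-direct product law gives $gx = (v_0 + h_0 v,\, h_0 h)$, so $\phi(gx) = f(v_0 + h_0 v)$. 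Hence the condition $\phi \in \ru(G)$ becomes
\begin{equation*}
\sup_{v \in \RR^2} \left| f(v_0 + h_0 v) - f(v) \right| \;\xrightarrow[(v_0,h_0)\to(0,1)]{}\; 0.
\end{equation*}
Notice the $h$-coordinate drops out, consistent with right $H$-invariance.

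Next I would split this combined condition via the triangle inequality,
\begin{equation*}
\left| f(v_0 + h_0 v) - f(v) \right| \;\le\; \left| f(v_0 + h_0 v) - f(h_0 v) \right| + \left| f(h_0 v) - f(v) \right|.
\end{equation*}
Setting $h_0 = 1$ in the combined condition yields uniform continuity of $f$ on $\RR^2$; setting $v_0 = 0$ yields the ``$H$-uniform'' condition $\sup_v |f(h_0 v) - f(v)| \to 0$ as $h_0 \to 1$. Conversely, since $v \mapsto h_0 v$ is a bijection of $\RR^2$, the first summand is bounded by the modulus of uniform continuity of $f$ evaluated at $|v_0|$, independently of $h_0$; so the two separate conditions (uniform continuity of $f$, and the $H$-condition) jointly imply the combined condition. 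Thus, under the standing assumption that $f$ is bounded and uniformly continuous on $\RR^2$, membership in $\ru(G/H)$ is equivalent to the $H$-condition alone.

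Finally I would remark that passing from a neighbourhood formulation to the sequential formulation stated in the lemma is automatic because $H = \SL_2(\RR)$ is first countable (even metrisable), so the two are equivalent. No real obstacle arises; the only point requiring care is the bookkeeping of the semi-direct product and the convention that right uniform continuity corresponds to continuity of the \emph{left} regular representation, which is exactly the convention recalled in the text just before the lemma.
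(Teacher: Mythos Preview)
Your argument is correct and is precisely the unwinding indicated by the paper, which states the lemma with a \qed{} and the remark that it follows ``using the semi-direct product structure of $G$''. The only content is the computation $\phi((v_0,h_0)(v,h)) = f(v_0 + h_0 v)$ and the splitting via the triangle inequality, both of which you carry out cleanly; the passage from nets to sequences via first countability of $H$ is likewise the intended step.
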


In any case, the restriction map $\ru(G/H)\to \cb(\RR^2)$ realises $\ru(G/H)$ as a closed subalgebra of $\cb(\RR^2)$ and therefore determines a compactification $\alpha(\RR^2)$ which has a structure of $G$-flow. The continuity of the action follows from the definition of $\ru(G/H)$.

\begin{thm}\label{thm:af}
\leavevmode
\begin{enumerate}[(i)]
\item The Furstenberg boundary $\bnd(G,H)$ is isomorphic to $\alpha(\RR^2)$.\label{thm:af:pt:isom}
\item The $\RR^2$-action on the corona $\alpha(\RR^2)\setminus \RR^2$ is trivial.\label{thm:af:pt:triv}
\item The identification $\RR^2\cong\mathring D$ extends to a $G$-map $\alpha(\RR^2)\to D$ mapping the corona onto $\SSS^1$.\label{thm:af:pt:quot}
\item The compact space $\alpha(\RR^2)$ is non-metrisable.\label{thm:af:pt:huge}
\end{enumerate}
\end{thm}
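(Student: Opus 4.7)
The plan is to treat the four items in the order (iii), (i), (ii), (iv): part (iii) produces a convenient tool that I would reuse in (i), while (ii) and (iv) are essentially independent.

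For part~\eqref{thm:af:pt:quot}, I would observe that the restriction $f\mapsto f|_{\RR^2}$ sends $\cont(D)$ into $\ru(G/H)$: both the $\RR^2$- and $H$-uniform continuity follow from the fact that continuous functions on the compact $G$-flow $D$ are uniformly continuous for the jointly continuous $G$-action. Gelfand duality then produces a continuous $G$-equivariant map $\pi\colon \alpha(\RR^2)\to D$ extending the identity on $\RR^2$. A point in the corona is the limit of a net $v_\alpha\in\RR^2$ with $|v_\alpha|\to\infty$, so its image lies in $\SSS^1$; conversely every $\xi\in\SSS^1$ is such a limit, giving surjectivity onto $\SSS^1$.

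For part~\eqref{thm:af:pt:isom}, I would verify that $\alpha(\RR^2)$ is a $(G,H)$-boundary via Proposition~\ref{prop:traduc}. The origin $0\in\RR^2\subseteq\alpha(\RR^2)$ is $H$-fixed, and $\delta_0$ is $G$-contracted to every Dirac mass by density of the orbit $G\cdot 0=\RR^2$ in $\alpha(\RR^2)$. Uniqueness of the $H$-fixed measure follows by pushing any other such $\mu\in\prob(\alpha(\RR^2))$ forward under $\pi$ to an $H$-invariant measure on $D$, which Proposition~\ref{prop:D:bnd} forces to be $\delta_0$; since $\pi\inv(0)=\{0\}$, this gives $\mu=\delta_0$. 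For universality, given any $(G,H)$-irreducible affine flow $K$ with an $H$-fixed point $x\in K^H$, the orbital map $G/H\to K$, $gH\mapsto gx$, is continuous with compact target, so pullbacks of $\cont(K)$ are right uniformly continuous on $G$ and right $H$-invariant, hence lie in $\ru(G/H)$. Gelfand duality then yields a $G$-map $\alpha(\RR^2)\to K$, which composed with the barycentre of $K$ gives the required $G$-morphism $\prob(\alpha(\RR^2))\to K$. Hence $\prob(\alpha(\RR^2))\cong\bnda(G,H)$ and $\bnd(G,H)\cong\alpha(\RR^2)$.

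For part~\eqref{thm:af:pt:triv}, the idea is that an $\RR^2$-translation is asymptotically indistinguishable, from the viewpoint of $\ru(G/H)$, from an element of $H$ near the identity. Given $t\in\RR^2$ and $v\neq 0$, one constructs $h_v\in\SL_2(\RR)$ with $h_v v=v+t$ and $\|h_v-I\|=O(|t|/|v|)$, so $h_v\to I$ as $|v|\to\infty$. Lemma~\ref{lem:ru} then yields $|f(v+t)-f(v)|=|f(h_v v)-f(v)|\to 0$ for every $f\in\ru(G/H)$, which is the statement that $R_t f-f$ vanishes on the corona, i.e.\ that $t$ acts trivially there.

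For part~\eqref{thm:af:pt:huge}, metrisability of the compact space $\alpha(\RR^2)$ is equivalent to separability of $\ru(G/H)=\cont(\alpha(\RR^2))$, so it suffices to exhibit an uncountable separated family in $\ru(G/H)$. The uniform estimate $|\log(|hv|+1)-\log(|v|+1)|=O(\|h-I\|)$ (together with the trivial bound by $|t|$ for translations) shows that $v\mapsto\log(|v|+1)$ is both $\RR^2$- and $H$-uniformly continuous, yielding an isometric algebra embedding $\cbu([0,\infty))\hookrightarrow\ru(G/H)$ by pre-composition. A standard construction of $2^{\aleph_0}$ disjointly supported bump functions in $\cbu([0,\infty))$ exhibits a $1$-separated uncountable family, giving non-separability. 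I expect the main technical obstacle to lie in the universality step of part~\eqref{thm:af:pt:isom}: it hinges on the right uniform continuity of pullbacks along the orbital map into an arbitrary $(G,H)$-irreducible $K$, which is what makes Gelfand duality applicable and ultimately identifies $\alpha(\RR^2)$ with $\bnd(G,H)$.
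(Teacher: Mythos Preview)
Your proposal is correct and close in spirit to the paper's proof, but with one genuine reorganisation worth noting.

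The paper proves the four items in the order \eqref{thm:af:pt:triv}, \eqref{thm:af:pt:isom}, \eqref{thm:af:pt:quot}, \eqref{thm:af:pt:huge}, whereas you do \eqref{thm:af:pt:quot}, \eqref{thm:af:pt:isom}, \eqref{thm:af:pt:triv}, \eqref{thm:af:pt:huge}. The substantive difference lies in the uniqueness of the $H$-fixed measure in part~\eqref{thm:af:pt:isom}. The paper first establishes~\eqref{thm:af:pt:triv} and then argues: any $H$-invariant mass on the corona would be $\RR^2$-invariant by~\eqref{thm:af:pt:triv}, hence $G$-invariant, hence a witness to co-amenability of $H$ in $G$, which is ruled out by the absence of an $H$-invariant mean on Borel sets of $\RR^2$. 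You instead push forward along the map $\pi\colon\alpha(\RR^2)\to D$ from~\eqref{thm:af:pt:quot} and invoke the uniqueness already proved in Proposition~\ref{prop:D:bnd}, together with $\pi^{-1}(0)=\{0\}$. Your route is shorter and avoids the co-amenability detour; the paper's route has the virtue of making explicit that non-co-amenability is what forces the corona to carry no $H$-invariant mass.

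Two minor remarks. In~\eqref{thm:af:pt:triv} you assert a single $h_v\in\SL_2(\RR)$ with $h_v v=v+t$ exactly and $\|h_v-I\|=O(|t|/|v|)$; this is true (rotate $v$ to $|v|e_1$ and take a lower-triangular matrix), but the paper instead splits into two cases and in one of them only achieves $h_v v-(v+t)\to 0$, using ordinary uniform continuity to absorb the error. Either works. In your invocation of Lemma~\ref{lem:ru} for~\eqref{thm:af:pt:triv}, note that the lemma is stated for a fixed sequence $h_n\to 1$ with a supremum over $v$; what you actually use is the immediate consequence that $\sup_w|f(h w)-f(w)|\to 0$ as $h\to 1$, applied at $w=v$ with $h=h_v$. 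Parts~\eqref{thm:af:pt:quot} and~\eqref{thm:af:pt:huge} match the paper's arguments essentially verbatim.
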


The point of~\eqref{thm:af:pt:quot} is that even when we know that  $\alpha(\RR^2)$ is $\bnd(G,H)$, the corona is a priori only mapped to $\prob(\SSS^1)$.

\begin{proof}[Proof of Theorem~\ref{thm:af}]
We begin with~\eqref{thm:af:pt:triv}. Let $(p_i)_{i\in I}$ be any net tending to infinity in the locally compact space $\RR^2$ (this means by definition that $p_i$ eventually leaves any compact subset). Write $p_i=\left(\begin{smallmatrix} x_i\\y_i\end{smallmatrix}\right)$ and suppose that it converges in $\alpha(\RR^2)$. By symmetry under rotations and scaling, it suffices to show that $p_i$ and $p'_i=\left(\begin{smallmatrix} x_i+1\\y_i\end{smallmatrix}\right)$ have the same limit. Suppose first that $|y_i|$ tends to infinity. Let $h_i\in H$ be the matrix $\left(\begin{smallmatrix} 1&1/y_i\\ 0&1 \end{smallmatrix}\right)$. Since $h_i\to 1$ and $h_i p_i = p'_i$, indeed  $p_i$ and $p'_i$ have the same limit. Therefore we can reduce to the case where $y_i$ remains bounded. Now $|x_i|$ tends to infinity and we take $a_i\in H$ to be the diagonal matrix with diagonal entries $(x_i +1)/x_i$ and $x_i/(x_i +1)$. Again, $a_i\to 1$. This time $a_i p_i - p'_i$ tends to zero in $\RR^2$, and therefore the continuity of the $G$-action implies that $p_i$ and $p'_i$ have the same limit.

We now turn to~\eqref{thm:af:pt:isom}. We claim that the Dirac mass at the origin is the only $H$-invariant probability measure on $\alpha(\RR^2)$. Let indeed $\mu$ be $H$-invariant; it suffices to show that $\mu$ gives mass zero to the corona. Otherwise, by point~\eqref{thm:af:pt:triv}, we would obtain a $G$-invariant measure on $\alpha(\RR^2)$. This corresponds to a $G$-invariant mean on the space $\ru(G/H)$, which is one of the criteria for $H$ to be co-amenable in $G$, see~\cite[No.~2 \S4]{Eymard72}. However this co-amenability does not hold, because another equivalent criterion (same reference) is the existence of a $G$-invariant mean on $L^\infty(G/H)$, which does not exist due to the phenomenon already mentioned in the proof of Proposition~\ref{prop:D:bnd}.

Now the claim is established and it follows that $\alpha(\RR^2)$ is a $(G,H)$\hyph{}boundary in view of the density of $\RR^2$. To prove that is it the Furstenberg boundary of the pair, it suffices to show that there is a $G$\hyph{}morphism from $\prob(\alpha(\RR^2))$ to $\bnda(G,H)$. Let $x\in \bnda(G,H)$ be an $H$-fixed point. By continuity of the action and compactness, the orbital map of $x$ is a right uniformly continuous map $G/H\to \bnda(G,H)$. Therefore it extends to a continuous $G$-map from $\alpha(\RR^2)$ to $\bnda(G,H)$ and finally using the barycentre map we obtain a $G$\hyph{}morphism from $\prob(\alpha(\RR^2))$ to $\bnda(G,H)$ as desired.

To establish~\eqref{thm:af:pt:quot}, it suffices to show that any net in $\RR^2$ that converges to a point in the corona of $\alpha(\RR^2)$ must converge in direction. Let $s,c\colon \RR^2\to \RR$ be any continuous functions that coincide respectively with the sine and cosine of the direction of vectors in $\RR^2$ outside some compact neighbourhood of the origin. What we have to show is that $s$ and $c$ are in $\ru(G/H)$. This follows from Lemma~\ref{lem:ru}.


Finally, to justify~\eqref{thm:af:pt:huge}, it suffices to exhibit an embedding of $\cbu(\RR_{\geq 0})$ to $\cont(\alpha(\RR^2))$. Here the uniform structure on $\RR_{\geq 0}$ is the usual one (induced from the group $\RR$, or equivalently the usual metric). Given $f\in \cbu(\RR_{\geq 0})$, one checks that the function $\wt{f}$ defined on $v\in \RR^2$ by $\wt{f}(v)= f(\log(\|v\|+1))$ is in $\ru(G/H)$ by using Lemma~\ref{lem:ru}.
\end{proof}

We have now justified all claims made in the Introduction about Example~\ref{exam:affine}. In particular, point~\eqref{thm:af:pt:quot} above implies:

\begin{cor}\label{cor:nomap}
There is no $G$-map $\bnd(G)\to \bnd(G,H)$.\qed
\end{cor}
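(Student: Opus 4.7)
The plan is to argue by contradiction, leveraging the $G$\hyph{}equivariant quotient $p\colon \bnd(G,H) \to D$ furnished by Theorem~\ref{thm:af}\eqref{thm:af:pt:quot}. Assume a $G$-map $\phi\colon \bnd(G) \to \bnd(G,H)$ exists and form $\psi = p \circ \phi\colon \PP^1 \to D$, which is a continuous $G$-equivariant map. The goal is to show that no such $\psi$ can exist.

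First I would locate the image of $\psi$ inside $\SSS^1 \subset D$ by exploiting the normal subgroup $\RR^2 \lhd G$. The $G$-action on $\PP^1 = \bnd(G)$ factors through $G \to H = \SL_2(\RR)$, so translations in $\RR^2$ act trivially on $\PP^1$. On the other hand, in $D$ the subgroup $\RR^2$ acts by translations on the open disc $\mathring D \cong \RR^2$ and trivially on $\SSS^1$; hence the $\RR^2$-fixed set of $D$ is exactly $\SSS^1$. Equivariance of $\psi$ forces $\psi(\PP^1) \subseteq \SSS^1$.

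Next I would obtain the contradiction using the element $-I \in \SL_2(\RR) = H \subset G$. On $\PP^1$, viewed as the space of lines through the origin, $-I$ acts trivially; on the circle $\SSS^1$ of oriented directions, $-I$ acts as the antipodal involution, which has no fixed point. Equivariance of $\psi$ would force $\psi(x) = (-I)\psi(x)$ for every $x \in \PP^1$, so $\psi(\PP^1)$ would consist entirely of fixed points of the antipodal map, contradicting $\PP^1 \neq \emptyset$.

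The main obstacle is really the first step: before one can invoke the antipodal argument, one must know that the image of $\psi$ avoids the open disc. Once the $\RR^2$-action discrepancy confines the image to $\SSS^1$, the different kernels of the $G$-actions on $\PP^1$ (which contains $-I$) and on $\SSS^1$ (which does not) produce the contradiction with no further work. No appeal to the full structure of $\alpha(\RR^2)$ is needed beyond the existence of the quotient onto $D$.
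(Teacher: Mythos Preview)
Your argument is correct and follows exactly the route the paper intends: compose with the quotient $\bnd(G,H)\to D$ from Theorem~\ref{thm:af}\eqref{thm:af:pt:quot} and rule out any $G$-map $\PP^1\to D$. The paper simply asserts the latter fact in the Introduction (point~\eqref{af:pt:no-map} of Example~\ref{exam:affine}) without justification, whereas you supply the elementary reason via the kernels of the $G$-actions (first $\RR^2$ to force the image into $\SSS^1$, then $-I$ to reach a contradiction); so your write-up is a fleshed-out version of the same proof.
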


\begin{rem}\label{rem:no-morph:pairs} 
This example also justifies the claim in Remark~\ref {rem:morph:pairs} that there is in general no $f$-equivariant morphism $\bnd(G', H')\to \bnd(G, H)$ associated to a morphism of pairs $f$. Indeed, in the context of Corollary~\ref{cor:nomap}, we can take $G'=G$ with $f$ the identity and $H'$ trivial.
\end{rem}

We note in passing that $D$ is not a smallest non-trivial $(G,H)$\hyph{}boundary: it admits as a quotient the projective plane $\PP^2$ obtained by identifying each direction with its opposite in the boundary circle of $D$. 

The example of $\PP^2$ would not, however, have allowed us to deduce Corollary~\ref{cor:nomap} since there is an obvious $G$-map $\PP^1\to\PP^2$.

\bigskip
A small variation of Example~\ref{exam:affine}, leaving the connected case, gives a Furstenberg boundary that is a very classical object and in a sense even larger than $\alpha(\RR^2)$ (onto which is naturally projects).

\begin{exam}\label{exam:Samuel}
Let $G= \RR^2 \rtimes \SL_2(\ZZ)$ and $H=\SL_2(\ZZ)$.
%
%
Then $\bnd(G,H) \cong \sigma(\RR^2)$, the Samuel compactification of $\RR^2$. This coincides with the \emph{greatest ambit} of $\RR^2$ and can be defined as the Gelfand spectrum of the algebra of uniformly continuous bounded functions on $\RR^2$.
\end{exam}

The proof is a much simpler version of Theorem~\ref{thm:af} since there is no $H$-continuity to take into account. In fact it is almost the same as Theorem~\ref{thm:malnormal}, only with the topology of $\RR^2$ coming into the picture.

The same arguments show the following. Define $H$ to be $\SL_2^\mathrm{d}(\RR)$, which denotes the group $\SL_2(\RR)$ in the discrete topology. Set $G= \RR^2 \rtimes H$, which is still a locally compact group. Then $\bnd(G,H) \cong \sigma(\RR^2)$, in contrast to Theorem~\ref{thm:af}.


\bibliographystyle{amsplain}
\bibliography{../BIB/ma_bib}

\end{document}